\newtheorem{theorem}{Theorem}[section]
\newtheorem{lemma}{Lemma}[section]
\newtheorem{proposition}{Proposition}[section]
\newtheorem{remarks}{Remarks}[section]
\numberwithin{equation}{section}
\begin{document}
\title[Decay of the local energy...]{Decay of the local energy for the solutions of the critical Klein-Gordon equation}
\author{Ahmed Bchatnia}
\address{UR  Analyse Non-Lin\'eaire et G\'eom\'etrie, UR13ES32, Department of Mathematics, Faculty of Sciences of Tunis, University of Tunis El Manar, Tunisia}
\email{ahmed.bchatnia@fst.utm.tn}
\author{Naima Mehenaoui}
\address{UR  Analyse Non-Lin\'eaire et G\'eom\'etrie, UR13ES32, Department of Mathematics, Faculty of Sciences of Tunis, University of Tunis El Manar, Tunisia}
\email{naima.mehenaoui@fst.utm.tn}
\date{\today}
\subjclass[2000]{35B35, 35L05.}
\keywords{Klein-Gordon, wave, exponential decay.}

\begin{abstract}
In this paper, we prove the exponential decay of local
energy for the Klein-Gordon equation with localized critical nonlinearity. The proof relies on generalized Strichartz estimates, and semi-group of Lax-Phillips.

{R\'{e}sum\'{e}}: Dans cet article, on d\'{e}montre la d\'{e}croissance
exponentielle de l'\'{e}nergie locale des solutions de l'\'{e}quation  de Klein-Gordon, avec une nonlin\'{e}arit\'{e} critique localis\'{e}e. La preuve est bas\'{e}e sur les in\'{e}galit\'{e}s de
Strichartz g\'{e}n\'{e}ralis\'{e}es, et le semi-groupe de Lax-Phillips.

\end{abstract}

\maketitle

\tableofcontents

\section{Introduction and position of the problem}
In this paper, we are interesting to the following system:
\begin{equation}\label{1.1}
\left\{
\begin{array}{l}
\square u+\chi_{1}u+\chi_{2} u^{5}=0\ \text{on\ }\ \mathbb{R}\times \mathbb{R}^{3}
\\
u(0,x)=u^{0}(x)\in H^{1}(\mathbb{R}^{3} )\ \text{and}\ \partial
_{t}u(0,x)=u^{1}(x)\in L^{2}(\mathbb{R}^{3} ),%
\end{array}%
\right.
\end{equation}%
where $\square=\partial_{t}^{2}-\triangle$, $\chi_{1}$ and $\chi_{2}$ are positives functions, of class $C^{1}$, with compact support such that $supp \chi_{1}\cup supp \chi_{2}\subset B_{R}$ for some $R>0$ and satisfying
\begin{equation}\label{cond}
x\cdot\nabla\chi_{1}(x)\leq 0 \quad \text{and}\quad x\cdot\nabla\chi_{2}(x)\leq 4,\; \forall x\in \mathbb{R}^3.
\end{equation}
 We denote $%
H=H^{1}(\mathbb{R}^{3} )\times L^{2}(\mathbb{R}^{3} )$ with respect to the norm
\begin{equation*}
\left\Vert \left( \varphi _{1},\varphi _{2}\right) \right\Vert
_{H}^{2}=\int_{\mathbb{R}^{3}}\left(\left\vert \nabla \varphi _{1}\right\vert
^{2}+\left\vert \varphi _{2}\right\vert ^{2}\right)dx.
\end{equation*}%
Global existence and uniqueness of solutions to the Cauchy problem (\ref{1.1}) has been extensively studied in the last decades. Thanks to the works of \cite{12,13} and \cite{27,28}, it is by now well known that for every initial data  $(u^{0},u^{1})\in H^{1}(\mathbb{R}^{3})\times  L^{2}(\mathbb{R}^{3})$; system (\ref{1.1})  admits a unique solution $u$ in the \textquotedblleft Shatah-Struwe\textquotedblright\
class, that is%
\begin{equation*}
u\in C(\ \mathbb{R},H^{1}(\mathbb{R}^{3} ))\cap L_{loc}^{5}(\ \mathbb{R}%
,L^{10}(\mathbb{R}^{3} )),\text{ }\partial _{t}u\in C(\ \mathbb{R},L^{2}(\mathbb{R}^{3} )).
\end{equation*}%
The global energy of $u$ at time $t$ is defined by%
\begin{equation}
E(u(t))=\frac{1}{2}\int_{\mathbb{R}^{3} }\bigskip \left( \left\vert \partial
_{t}u\left( t\right) \right\vert ^{2}+\left\vert \nabla _{x}u\left( t\right)
\right\vert ^{2}+\chi_{1}(x)|u|^{2}\right) dx+\frac{1}{6}\int_{\mathbb{R}^{3} }\chi_{2}(x)\left\vert
u\left( t\right) \right\vert ^{6}dx,
\end{equation}%
which is time independent.
\medskip\\
We define the local energy by%
\begin{equation}
E_{R }(u(t))=\frac{1}{2}\int_{B_{R }}\bigskip \left(
\left\vert \partial _{t}u\left( t\right) \right\vert ^{2}+\left\vert \nabla
_{x}u\left( t\right) \right\vert ^{2}+\chi_{1}(x)|u|^{2}\right) dx+\frac{1}{6}\int_{B_{R}}\chi_{2}(x)\left\vert u\left( t\right) \right\vert ^{6}dx;
\end{equation}%
where $B_{R }$ is a ball of radius $R$.
\medskip\\
For every $t\in \mathbb{R},$ we define the nonlinear Klein Gordon operator $U(t)$ by%
\begin{equation*}
\begin{array}{cccc}
U(t): & H & \longrightarrow & H \\
& \left( \varphi _{1},\varphi _{2}\right) & \longmapsto & U\left( t\right)
\left( \varphi _{1},\varphi _{2}\right) =\left( u\left( t\right) ,\partial
_{t}u\left( t\right) \right)%
\end{array}%
\newline
\end{equation*}%
where $u$ is the solution of (\ref{1.1}) in the \textquotedblleft
Shatah-Struwe\textquotedblright\ class\ with initial data $\varphi =\left(
\varphi _{1},\varphi _{2}\right) .$\\*[0pt]
$\left( U\left( t\right) \right) _{t\in \mathbb{R}}$ forms a one parameter
continuous group on $H,$ to which we will refer as the nonlinear group.
\medskip\\
The main result of this work is to prove that the decay of the local energy of the solutions of (\ref{1.1})  is of exponential type. More precisely we have the following theorem:
\begin{theorem}[Exponential decay of the local energy]\label{exp}
For all $R>0$, there exist $\alpha>0$ and $c>0$ such that
\begin{equation}
E_{R}(u(t))\leq C e^{-\alpha t}E(0)
\end{equation}
holds for every $u$ solution to (\ref{1.1})  with initial data $(u^{0}, u^{1})\in H$ supported in $B_{R}$.
\end{theorem}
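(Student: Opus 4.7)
The plan is to combine three ingredients: a Morawetz-type multiplier identity driven by the star-shaped conditions \eqref{cond}, global Strichartz/Shatah--Struwe control of the critical nonlinearity $\chi_2 u^5$, and the Lax--Phillips semigroup machinery, which in three space dimensions converts uniform local-energy decay into exponential decay.

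First I would establish an integrated local energy estimate. Testing \eqref{1.1} against a standard multiplier of the form $\mathcal{M}u = t\partial_t u + (x\cdot\nabla)u + u$ and integrating over $[0,T]\times\mathbb{R}^3$, the potential contribution produces a harmless boundary term plus $\tfrac12\int_0^T\!\!\int_{\mathbb{R}^3}(x\cdot\nabla\chi_1)\,|u|^2\,dx\,dt$, which is nonpositive by the first assumption in \eqref{cond}; the critical contribution produces $\tfrac16\int_0^T\!\!\int_{\mathbb{R}^3}(x\cdot\nabla\chi_2-4)\,|u|^6\,dx\,dt$, which is nonpositive by the second assumption (the constant $4$ is tuned to the scaling of the sextic energy). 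After absorbing the kinetic terms via energy conservation, this yields
\begin{equation*}
\int_0^\infty E_R(u(t))\,dt \leq C\,E(0).
\end{equation*}

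Second, using Shatah--Struwe theory together with the compact support of $\chi_2$ and conservation of the energy, I would obtain the global Strichartz bound $\|u\|_{L^5(\mathbb{R};L^{10}(\mathbb{R}^3))}\leq C(\|\varphi\|_H)$, which, combined with finite propagation speed, allows me to treat $\chi_2 u^5$ as an integrable perturbation of the linear Klein--Gordon equation $\square v + \chi_1 v = 0$ outside a compact set.

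Third, I would set up the Lax--Phillips framework. Let $D_\pm\subset H$ be the outgoing/incoming subspaces associated to the free wave group (defined via Huygens' principle in $\mathbb{R}^3$), set $K=H\ominus(D_-\oplus D_+)$, and introduce the nonlinear operator $Z(t)=P_K\circ U(t)\circ P_K$. By finite speed of propagation, for data supported in $B_R$ the quantity $E_R(u(t))$ is controlled by $\|Z(t)\varphi\|_H^2$. The main obstacle, and the heart of the proof, is to show that $\|Z(t)\|\to 0$ as $t\to\infty$: I would argue by contradiction, assuming a sequence $\varphi_n$ with $\|\varphi_n\|_H=1$ and $\|Z(t_n)\varphi_n\|\geq\delta>0$ for some $t_n\to\infty$, and using the integrated decay together with the Strichartz bound to extract (via a concentration-compactness argument in the Shatah--Struwe class) a nontrivial limit solution of \eqref{1.1} with vanishing local energy on $\mathbb{R}_+$, contradicting unique continuation. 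Once $\|Z(t)\|\to 0$ is established, the nonlinear semigroup estimate $\|Z(t+s)\|\leq \|Z(t)\|\,\|Z(s)\|$ (made rigorous thanks to Strichartz stability) upgrades the decay to $\|Z(t)\|\leq Ce^{-\alpha t}$, which translates directly into the stated bound on $E_R(u(t))$.
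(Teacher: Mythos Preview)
Your outline contains a genuine structural gap in the third step. You propose to form a \emph{nonlinear} Lax--Phillips operator $Z(t)=P_K\circ U(t)\circ P_K$, where $U(t)$ is the full nonlinear flow of \eqref{1.1}, and then to invoke the submultiplicativity $\|Z(t+s)\|\le\|Z(t)\|\,\|Z(s)\|$ to upgrade decay to exponential decay. But this submultiplicativity is a linear phenomenon: in the linear theory it rests on the semigroup identity $Z(t+s)=Z(t)Z(s)$ on $K$ (itself a nontrivial fact, relying on $U(t)D_+\subset D_+$ and its adjoint analogue), together with the operator-norm inequality $\|AB\|\le\|A\|\|B\|$. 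For a nonlinear map neither ingredient survives. The quantity $\|Z(t)\|$ is not even a priori well defined; if you interpret it as a Lipschitz constant or as a sup over the unit ball, you still cannot compose, because $P_K$ does not commute with the nonlinearity and because $U(t)$ is not homogeneous. ``Strichartz stability'' does not repair this: stability gives you control of $U(t)\varphi-U(t)\psi$ in terms of $\varphi-\psi$, not a factorisation of $Z(t+s)$ through $Z(t)$ and $Z(s)$.

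The paper avoids this difficulty by keeping the Lax--Phillips machinery entirely on the \emph{linear} side. One first proves exponential decay of the local energy for the linear localized Klein--Gordon equation $\square v+\chi_1 v=0$, by building the semigroup $Z_{KG}(t)=P^+U_{KG}(t)P^-$ attached to the linear flow, comparing it to the free wave group via the Nunes--Bastos polynomial decay, and then using the genuine (linear) semigroup property to pass to exponential decay. The nonlinear term $\chi_2 u^5$ is then treated as a source through Duhamel: the Morawetz-type identity on truncated cones (your Step~1, but used to obtain $\int\chi_2|u|^6\,dx\to0$ pointwise in $t$, not an integrated-in-time bound) together with global Strichartz control yields $\|\chi_2 u^5\|_{L^1(I;L^2)}\le C\varepsilon^{3/2}\|Ku\|_{L^\infty(I;E)}$ for large times, and a Gronwall argument against the linear exponential kernel $e^{-\beta(t-s)}$ closes the estimate. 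In short, the route to exponential decay here is \emph{linear Lax--Phillips + Duhamel + Gronwall}, not a nonlinear Lax--Phillips semigroup; your proposed contradiction/concentration-compactness argument for $\|Z(t)\|\to0$ and the subsequent submultiplicative step would both need to be replaced.
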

We note that a large number of works have been devoted to the local energy decay for wave equation, we
principally quote the works of W. Strauss \cite{26}, and Jeng-Eng. Lin \cite{16}. Moreover, we mention the work of the first author \cite{39}, which treat the case of critical wave equations outside a convex obstacle.\medskip \\
For Klein-Gordon equation, the literature is less provide. We quote essentially the result of C. Morawetz \cite{19} and the result of stabilization of B. Dehman and P. G\'{e}rard \cite{6}. Furthermore, a recent result by M. Malloug
\cite{38} which establishes an exponential decay of the local energy for the damped Klein-Gordon equation in exterior domain and R-S-O. Nunes and W-D. Bastos \cite{36} obtains polynomial decay of the local energy for the linear Klein Gordon equation.

The rest of this paper is organized as follows.\\
In section 2,  we prove by an argument inspired from the works of \cite{32, 4} that the Strichartz norms of the solutions to (\ref{1.1})  are global in time.\\
In section 3, we prove the exponential decay of the localized linear Klein-Gordon equation. For this purpose, we introduce the Lax-Phillips semigroup $Z_{KG}(t)$ and an argument inspired from the work of \cite{36}.\\
Section 4 is devoted to prove the main result of this paper. By combining the results obtained in section 2 and 3, dealing with the nonlinear term $\chi_{2}u^{5}$ as a source term and using the Gronwall lemma in crucial way, we obtain then the exponential decay of the local energy for the solutions of (\ref{1.1}).
\section{Strichartz norms global in time}

The goal of this section is to prove that Strichartz norms for the solutions of (\ref{1.1}) are global in time, we recall the following theorem due to J. Zhang and J. Zheng.

\begin{theorem} \cite[Zhang--Zheng]{37} \label{thm2} \\
Let $(X,g)$ be a nontrapping scattering manifold of dimension $n\geq 3$. Suppose that $u$ is the solution to the Cauchy problem:%
\begin{equation*}
(S)\left\{
\begin{array}{l}
\partial_{t}^{2}u-\triangle_{g}u+u=F\left( t,z\right) \text{ }(t,z)\in I\times X \\
u(0)=u_{0}(z)\ \text{and }\ \partial_{t}u(0)=u_{1}(z)%
\end{array}%
\right.
\end{equation*}%
For some initial data $u_{0}\in H^{s}$, $u_{1}\in H^{s-1}$, and the time interval $I\subseteq \mathbb{R}$, then
\begin{eqnarray*}
  \|u(t,z)\|_{L_{t}^{q}(I;L_{z}^{r}(x))}+\|u(t,z)\|_{C(I;H^{s}(x))}\leq \|u_{0}\|_{H^{s}(X)}+ \|u_{1}\|_{H^{s-1}(X)}+\|F\|_{L_{t}^{\widetilde{q}'}(I;L_{z}^{\widetilde{r}'}(x))},
\end{eqnarray*}
where the pairs $(q,r)$,$(\widetilde{q},\widetilde{r})\in [2,+\infty]^{2}$ satisfy the KG-admissible condition with $0\leq \theta \leq 1$
\begin{equation*}
\frac{2}{q}+\frac{n-1+\theta}{r}\leq\frac{n-1+\theta}{2},\quad (q,r,n,\theta)\neq (2,\infty,3,0)
\end{equation*}
and the gap condition
\begin{equation*}
 \frac{1}{q}+\frac{n+\theta}{r} =\frac{n+\theta}{2}-s=\frac{1}{\widetilde{q}'}+\frac{n+\theta}{\widetilde{r}'}-2
\end{equation*}
\end{theorem}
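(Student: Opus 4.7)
\textbf{Proof proposal for Theorem \ref{thm2}.}
The plan is to reduce the global-in-time Strichartz bound to a family of microlocal dispersive estimates for the Klein--Gordon propagator on the scattering manifold $(X,g)$, then to apply an abstract $TT^{*}$ argument in the spirit of Keel--Tao. Writing the equation in Duhamel form
\begin{equation*}
u(t)=\cos(t\sqrt{1-\triangle_{g}})u_{0}+\frac{\sin(t\sqrt{1-\triangle_{g}})}{\sqrt{1-\triangle_{g}}}u_{1}+\int_{0}^{t}\frac{\sin((t-s)\sqrt{1-\triangle_{g}})}{\sqrt{1-\triangle_{g}}}F(s)\,ds,
\end{equation*}
the problem reduces to controlling $e^{\pm it\sqrt{1-\triangle_{g}}}$ in mixed Lebesgue norms. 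The homogeneous half of the estimate will in turn be deduced from a dispersive bound on frequency-localized data of the form
\begin{equation*}
\|e^{it\sqrt{1-\triangle_{g}}}P_{N}f\|_{L^{\infty}(X)}\lesssim |t|^{-\frac{n-1+\theta}{2}}N^{\frac{n+1-\theta}{2}}\|f\|_{L^{1}(X)},
\end{equation*}
where $P_{N}$ is a Littlewood--Paley projection at dyadic frequency $N$ and $\theta\in[0,1]$ interpolates between wave-type ($\theta=0$) and Schr\"odinger-type ($\theta=1$) behaviour typical of Klein--Gordon.

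The second step is to establish this dispersive bound on the scattering manifold. I would split the frequency regime into low frequencies $N\lesssim 1$, where the mass term dominates and the propagator behaves like a Schr\"odinger operator, yielding $|t|^{-n/2}$ decay (i.e. $\theta=1$), and high frequencies $N\gg 1$, where the propagator is close to the wave propagator, yielding $|t|^{-(n-1)/2}$ decay (i.e. $\theta=0$); the full range of $\theta$ then follows by complex interpolation. In either regime one constructs a semiclassical parametrix, or equivalently exploits global spectral-measure estimates of Hassell--Vasy type, adapted to the scattering structure at infinity. The nontrapping hypothesis is essential at this point: it guarantees that every bicharacteristic of the symbol $\sqrt{|\xi|_{g}^{2}+1}$ escapes to spatial infinity in finite time, so that the parametrix extends uniformly to all $t\in\mathbb{R}$ and no trapped singularities obstruct the pointwise decay. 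With the dispersive bound in hand, the Keel--Tao $TT^{*}$ machinery delivers the homogeneous Strichartz estimate on the full KG-admissible set of pairs $(q,r)$, the exclusion $(q,r,n,\theta)\neq(2,\infty,3,0)$ corresponding exactly to the known failure of the sharp wave endpoint.

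The inhomogeneous estimate with two possibly different admissible pairs $(q,r)$ and $(\widetilde{q},\widetilde{r})$ then follows by pairing the homogeneous bound with its dual through a $TT^{*}$-type identity to obtain
\begin{equation*}
\Big\|\int_{\mathbb{R}}\frac{\sin((t-s)\sqrt{1-\triangle_{g}})}{\sqrt{1-\triangle_{g}}}F(s)\,ds\Big\|_{L_{t}^{q}L_{z}^{r}}\lesssim \|F\|_{L_{t}^{\widetilde{q}'}L_{z}^{\widetilde{r}'}},
\end{equation*}
and then removing the untruncated integral in favour of $\int_{0}^{t}$ via the Christ--Kiselev lemma, valid whenever $q>\widetilde{q}'$. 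The gap conditions $\frac{1}{q}+\frac{n+\theta}{r}=\frac{n+\theta}{2}-s$ are just the scaling identities that fix the regularity loss $s$ for each admissible pair, and the second equality in the gap condition is precisely the duality relation between $(q,r)$ and $(\widetilde{q}',\widetilde{r}')$.

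The hard part is the construction of a parametrix that is globally valid in time. On $\mathbb{R}^{n}$ the Klein--Gordon propagator is explicit via the Fourier transform and the dispersive estimate reduces to stationary phase, but on a scattering manifold one must argue microlocally, propagating a semiclassical symbol along the Hamiltonian flow and carefully tracking its behaviour at spatial infinity in the scattering calculus. This is exactly where the nontrapping condition and the asymptotically conic structure enter, and is the technical core of the Zhang--Zheng argument; the rest of the proof is a now-standard assembly of dispersive estimate, $TT^{*}$, interpolation and Christ--Kiselev.
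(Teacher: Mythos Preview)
The paper does not prove Theorem~\ref{thm2}; it is quoted verbatim from Zhang--Zheng \cite{37} and used as a black box. So there is no ``paper's own proof'' to compare your proposal against. Your outline is a reasonable high-level sketch of how such global Strichartz estimates on nontrapping scattering manifolds are typically obtained (spectral-measure/parametrix dispersive bounds followed by Keel--Tao $TT^{*}$ and Christ--Kiselev), and is broadly in the spirit of the Zhang--Zheng argument, but for the purposes of this paper the theorem is simply cited, not proved.
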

From Theorem \ref{thm2} we deduce the following proposition.
\begin{proposition}\label{prop1}
 Given $(q,r)\in [2,+\infty]$ with $\tilde{q}=1$, $\tilde{r}=2$, $s=1$ and $n=3$ satisfy the KG-admissible condition with $0\leq \theta \leq 1$
\begin{equation*}
\frac{2}{q}+\frac{2+\theta}{r}\leq\frac{2+\theta}{2},\quad (q,r,n,\theta)\neq (2,\infty,3,0)
\end{equation*}
and the gap condition
\begin{equation*}
 \frac{1}{q}+\frac{3+\theta}{r} =\frac{1+\theta}{2}.
\end{equation*}
Then for every $T\geq 0$, for every $u$ solution of $(S)$ we have
\begin{equation}\label{}
  \|u\|_{L^{q}([0,T],L^{r}(\mathbb{R}^{3}))}\leq E(u(0))^{{1}/{2}}+\left\Vert F\left( t,x\right) \right\Vert _{L^{1}(%
\left[ 0,T\right] ,L^{2}(\mathbb{R}^{3}))}
\end{equation}
\end{proposition}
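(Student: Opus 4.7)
The plan is to deduce the estimate from Theorem \ref{thm2} by specializing the general Zhang--Zheng bound to the ambient space $X=\mathbb{R}^{3}$, which is the simplest example of a nontrapping scattering manifold of dimension $n=3$, and to the regularity index $s=1$ matching energy-level initial data $(u_{0},u_{1})\in H^{1}\times L^{2}$. With this choice, the general gap equality of Theorem \ref{thm2} reads
\[
\tfrac{1}{q}+\tfrac{3+\theta}{r}=\tfrac{3+\theta}{2}-1=\tfrac{1+\theta}{2},
\]
and the KG-admissibility inequality becomes $\tfrac{2}{q}+\tfrac{2+\theta}{r}\le\tfrac{2+\theta}{2}$. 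These coincide exactly with the conditions imposed on $(q,r)$ in the proposition, so nothing needs to be checked on the estimate side.

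The next step is to select the source-side pair. Since the estimate I want places $F$ in $L^{1}_{t}L^{2}_{x}$, I take the dual exponents $\tilde{q}'=1$, $\tilde{r}'=2$, i.e.\ $(\tilde{q},\tilde{r})=(\infty,2)$. One then verifies that this pair satisfies the KG-admissibility inequality (with equality, since $\tfrac{2}{\infty}+\tfrac{2+\theta}{2}=\tfrac{2+\theta}{2}$) and that the second equality in the gap condition holds:
\[
\tfrac{1}{\tilde{q}'}+\tfrac{3+\theta}{\tilde{r}'}-2=1+\tfrac{3+\theta}{2}-2=\tfrac{1+\theta}{2},
\]
matching the value obtained on the $(q,r)$ side. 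Hence Theorem \ref{thm2} applies and yields
\[
\|u\|_{L^{q}([0,T];L^{r}(\mathbb{R}^{3}))}\le C\bigl(\|u_{0}\|_{H^{1}}+\|u_{1}\|_{L^{2}}+\|F\|_{L^{1}([0,T];L^{2})}\bigr).
\]

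To close the argument I would absorb the data norms into the initial energy by observing that for the Cauchy problem $(S)=\partial_{t}^{2}u-\Delta u+u=F$ the conserved linear Klein--Gordon energy is exactly $\tfrac{1}{2}(\|\nabla u_{0}\|_{L^{2}}^{2}+\|u_{0}\|_{L^{2}}^{2}+\|u_{1}\|_{L^{2}}^{2})$, so $\|u_{0}\|_{H^{1}}+\|u_{1}\|_{L^{2}}\lesssim E(u(0))^{1/2}$. The computation itself is purely bookkeeping; the only conceptual pitfall, and the one point that deserves care, is that the symbol $E(u(0))$ used here refers to the natural linear energy of $(S)$ (with mass term $|u|^{2}$), whereas the $E$ introduced in the introduction involves the cut-off $\chi_{1}|u|^{2}$ plus a $\chi_{2}|u|^{6}$ term. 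Once this identification is explicit, no real obstacle remains and the proposition follows at once from Theorem \ref{thm2}.
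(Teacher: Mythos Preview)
Your proof is correct and rests on the same idea as the paper's: specialize Theorem~\ref{thm2} to $X=\mathbb{R}^{3}$, $n=3$, $s=1$ with source exponents $(\tilde q',\tilde r')=(1,2)$. The only difference is procedural. The paper introduces a temporal cutoff $\chi_{T}$, replaces $F$ by $\chi_{T}F$ so that the source lies globally in $L^{1}(\mathbb{R};L^{2})$, applies Theorem~\ref{thm2} on all of $\mathbb{R}$ to an auxiliary solution $v_{T}$, and then observes $u=v_{T}$ on $[0,T]\times\mathbb{R}^{3}$. You instead invoke Theorem~\ref{thm2} directly on the interval $I=[0,T]$, which is legitimate since the theorem is already stated for arbitrary $I\subseteq\mathbb{R}$; this is slightly more economical and bypasses the auxiliary problem. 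Your explicit check that $(\tilde q,\tilde r)=(\infty,2)$ satisfies both the KG-admissibility inequality and the gap equality, together with your remark distinguishing the linear Klein--Gordon energy of $(S)$ from the nonlinear energy $E$ defined in the introduction, are clarifications the paper's own proof leaves implicit.
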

\begin{proof}
For $T>0$, we define a cutoff function by\\
$$\chi_{T}=\left\{
  \begin{array}{ll}
    1 & \quad \text{if}\; 0\leq t\leq T \\
    0 & \quad \text{if not};
  \end{array}
\right.$$\\
and we consider, $v_{T}$ the solution of the system\begin{equation*}
(S)\left\{
\begin{array}{l}
\square v_{T}+\chi_{T}F\left( t,x\right)=0 \text{\ on \ }\mathbb{R}\times \mathbb{R}^{3} \\
(v_{T}(0,x),\partial
_{t}v_{T}(0,x))=(u^{0}(x),u^{1}(x))%
\end{array}%
\right.
\end{equation*}%
where $u$ is the solutions of $(S)$ with initial data $(u^{0}(x),u^{1}(x))$ in $H$. By virtue of local time Strichartz of \cite{37}, we have $\chi_{T}F(t,x)\in L^{1}(\mathbb{R},L^{2}(\mathbb{R}^{3}))$ and thanks to  Theorem \ref{thm2}, we deduce
\begin{equation*}
\|v_{T}\|_{L^{q}(\mathbb{R},L^{r}(\mathbb{R}^{3}))}\leq E(u(0))^{{1}/{2}}+\left\Vert \chi_{T}F\left( t,x\right) \right\Vert _{L^{1}(\mathbb{R} ,L^{2}(\mathbb{R}^{3}))},
\end{equation*}
and therefore
\begin{equation*}
\|v_{T}\|_{L^{q}([0,T],L^{r}(\mathbb{R}^{3}))}\leq E(u(0))^{{1}/{2}}+\left\Vert \chi_{T}F\left( t,x\right) \right\Vert _{L^{1}([0,T] ,L^{2}(\mathbb{R}^{3}))}.
\end{equation*}
Since $u=v_{T}$ on $[0,T]\times \mathbb{R}^{3}$, we obtain
\begin{equation*}
\|u\|_{L^{q}([0,T],L^{r}(\mathbb{R}^{3}))}\leq E(u(0))^{{1}/{2}}+\left\Vert \chi_{T}F\left( t,x\right) \right\Vert _{L^{1}([0,T] ,L^{2}(\mathbb{R}^{3}))}.
\end{equation*}
\end{proof}
Let us recall now the following bootstrap lemma (see \cite{4}).
\begin{lemma}\label{lemma1}
Let $M(t)$ be a nonnegative continuous function in $\left[ 0,T\right] $ such
that, for every$\ t\in \left[ 0,T\right] ,\ $%
\begin{equation*}
M(t)\leq a+bM(t)^{\theta },
\end{equation*}%
where $a,b>0$ and $\theta >1$ are constants such that,
\begin{equation*}
a<\left(1-\frac{1}{\theta }\right) \frac{1}{(\theta b)^{1/\theta -1}},\text{ \ \ \ }%
M(0)\leq \frac{1}{(\theta b)^{1/\theta -1}}.
\end{equation*}%
Then for every $t$ $\in \left[ 0,T\right] $, we have
\begin{equation*}
M(t)\leq \frac{\theta }{\theta -1}a.
\end{equation*}
\end{lemma}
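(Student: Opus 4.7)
The plan is to recast the pointwise inequality $M(t)\le a+bM(t)^{\theta}$ as a statement about the sign of the auxiliary function
\begin{equation*}
\varphi(x) := a + bx^{\theta} - x,\qquad x\ge 0,
\end{equation*}
and then exploit the continuity of $M$ to trap its values in a single connected component of $\{\varphi\ge 0\}$.

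First I would analyse $\varphi$. It is strictly convex on $[0,\infty)$ since $\theta>1$, it satisfies $\varphi(0)=a>0$, and it attains its unique minimum at the critical point $x^*:=(\theta b)^{-1/(\theta-1)}$ determined by $\varphi'(x^*)=0$, i.e.\ $\theta b (x^*)^{\theta-1}=1$. Using the identity $b(x^*)^{\theta-1}=1/\theta$, a direct substitution yields
\begin{equation*}
\varphi(x^*) \;=\; a - \Bigl(1-\tfrac{1}{\theta}\Bigr)x^*,
\end{equation*}
so the smallness hypothesis on $a$ is exactly equivalent to $\varphi(x^*)<0$. Consequently $\varphi$ has two positive roots $y_1<x^*<y_2$, and
\begin{equation*}
\{\varphi\ge 0\}\cap[0,\infty) \;=\; [0,y_1]\cup[y_2,\infty).
\end{equation*}

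Next comes the bootstrap step itself. The hypothesis $M(t)\le a+bM(t)^{\theta}$ reads $\varphi(M(t))\ge 0$, so $M(t)\in[0,y_1]\cup[y_2,\infty)$ for every $t\in[0,T]$. The initial condition $M(0)\le x^*<y_2$ excludes the second component, forcing $M(0)\in[0,y_1]$. Since $M$ is continuous and the forbidden gap $(y_1,y_2)$ separates the two components, a connectedness argument gives $M(t)\in[0,y_1]$ throughout $[0,T]$.

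Finally, I would bound $y_1$. From $y_1=a+by_1^{\theta}$ together with $y_1\le x^*$, the inequality $by_1^{\theta-1}\le b(x^*)^{\theta-1}=1/\theta$ yields $by_1^{\theta}\le y_1/\theta$, whence $y_1\le a+y_1/\theta$ and therefore $y_1\le \tfrac{\theta}{\theta-1}\,a$. Combined with $M(t)\le y_1$, this is the asserted bound. There is no genuine obstacle here: the computations are elementary and the only conceptual point is the continuity/connectedness step, which is the hallmark of every bootstrap argument.
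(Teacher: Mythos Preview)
Your argument is correct and is exactly the standard proof of this bootstrap lemma: analyse the convex function $\varphi(x)=a+bx^{\theta}-x$, check that the hypothesis on $a$ forces $\varphi(x^*)<0$ at the unique minimiser $x^*=(\theta b)^{-1/(\theta-1)}$, use continuity of $M$ and the initial bound $M(0)\le x^*$ to trap $M(t)$ in the left component $[0,y_1]$ of $\{\varphi\ge 0\}$, and finally bound $y_1$ via $by_1^{\theta-1}\le 1/\theta$. All the computations check out.

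Note, however, that the paper does not actually prove this lemma: it merely recalls it from Bahouri--G\'{e}rard \cite{4} (``Let us recall now the following bootstrap lemma (see \cite{4})''), so there is nothing to compare your approach against. What you wrote is precisely the argument one finds in \cite{4} and in the standard literature.
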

Let%
\begin{equation*}
e(t)=\int\limits_{\substack{ \left\vert x\right\vert \leq t \\ x\in \mathbb{R}^{3}
}}\left[ \frac{1}{2}\left\vert \nabla _{x}u(t,x)\right\vert ^{2}+\frac{1}{2}%
\left\vert \partial _{t}u(t,x)\right\vert ^{2}+\frac{1}{2}\left\vert
u(t,x)\right\vert ^{2}+\frac{1}{6}\chi (x)\left\vert
u(t,x)\right\vert ^{6}\right] dx.
\end{equation*}%
\begin{lemma}\label{lemma2}
There exists $D>0$ such that, for all $b>a>R,$ for every solution $u$ to $%
\square u+\chi_{1}u+\chi_{2} (x)u^{5}=0,\ $with $u\in C(\left[ a,b\right] ,H^{1}(\mathbb{R}^{3}
))\cap L^{5}(\left[ a,b\right] ,L^{10}(\mathbb{R}^{3} )),$ $\partial _{t}u\in C(%
\left[ a,b\right] ,L^{2}(\mathbb{R}^{3} ))$ we have%
\begin{equation*}
\int_{_{\substack{ \left\vert x\right\vert \leq a  \\ x\in \mathbb{R}^{3} }}}\chi_{2}
(x)\left\vert u(a,x)\right\vert ^{6}dx\leq D\left[ \frac{b}{a}%
(e(b)+e(b)^{1/3})\right] .
\end{equation*}
\end{lemma}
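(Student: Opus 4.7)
The plan is a Morawetz-type multiplier argument in the spirit of Morawetz--Strauss and Bahouri--Shatah. I would test the equation $\square u+\chi_1 u+\chi_2 u^5=0$ against the dilation multiplier $Ku=tu_t+x\cdot\nabla u+u$ and integrate over the truncated backward cone $K_{a,b}=\{(t,x):a\leq t\leq b,\ |x|\leq t\}$. For the linear wave part a direct expansion yields
$$(\square u)Ku=\partial_t\Bigl[\tfrac{t}{2}(|u_t|^2+|\nabla u|^2)+u_t(x\cdot\nabla u)+uu_t\Bigr]+\nabla\cdot\mathcal{F},$$
while the nonlinear and potential contributions rewrite as
$$\chi_2 u^5 Ku=\partial_t\Bigl[\tfrac{t\chi_2}{6}u^6\Bigr]+\nabla\cdot\Bigl[\tfrac{x\chi_2}{6}u^6\Bigr]+\Bigl(\tfrac{\chi_2}{3}-\tfrac{x\cdot\nabla\chi_2}{6}\Bigr)u^6,$$
together with the analogous identity for $\chi_1 u Ku$, whose interior coefficient is $\chi_1+\tfrac12(x\cdot\nabla\chi_1)\leq\chi_1$ by the first hypothesis on $\chi_1$.

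Applying the divergence theorem on $K_{a,b}$ produces three boundary contributions: the bottom cap at $t=a$, the top cap at $t=b$, and the lateral mantle $\{|x|=t,\ a\leq t\leq b\}$. On the mantle the quadratic part of the flux collapses into a manifestly non-negative sum of squares of null derivatives of $u$ (the familiar Morawetz ``good'' term); under the assumptions on $\chi_1,\chi_2$ the potential and nonlinear lateral contributions are also non-negative, being proportional to $\chi_1 u^2$ and $\chi_2 u^6$ respectively, so the full lateral flux may be dropped with the correct sign. The bottom cap, by design of the multiplier, extracts exactly the quantity one wants to control, namely $\tfrac{a}{6}\int_{|x|\leq a}\chi_2(x)|u(a,x)|^6\,dx$, together with quadratic pieces that are bounded by $C\,a\,e(a)\leq C\,a\,e(b)$ thanks to the monotonicity of the cone energy $e(t)$.

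It remains to bound the top cap at $t=b$ and the interior remainders. The top cap quadratic terms are estimated by Cauchy--Schwarz and the bound $|x|\leq b$, using that $\tfrac12|u|^2$ is already contained in $e(t)$; they contribute at most $Cb\,e(b)$. The interior remainders are localised in $\operatorname{supp}\chi_1\cup\operatorname{supp}\chi_2\subset B_R$ and have bounded coefficients, and the critical $u^6$ piece, which has no intrinsic time decay, is handled by interpolating between $L^2$ and $L^6$ via the Sobolev embedding $H^1\hookrightarrow L^6$ and the uniform energy bound $\|\nabla u(t)\|_{L^2}^2\leq 2e(t)\leq 2e(b)$; a careful balancing produces the sub-linear correction $b\,e(b)^{1/3}$ in the final estimate. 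Collecting all boundary and interior contributions and dividing through by $a/6$ gives the announced inequality with a constant $D$ depending only on $R$ and the $C^1$-size of $\chi_1$, $\chi_2$.

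The principal obstacle I anticipate is the lateral-mantle sign analysis in the presence of the critical nonlinearity: since $x\cdot\nabla\chi_2\leq 4$ is not itself sign-definite, it is not immediate that the interior and conical contributions of the $\chi_2$ term cooperate, and one must verify directly that the lateral piece stays non-negative while the bounded interior piece is absorbed by the Sobolev interpolation. Simultaneously one must track the time weight carried by the multiplier $K$ so that the final prefactor is the clean $b/a$ rather than a weaker $(b-a)/a$ or $b/a^2$; this is where the precise design of the multiplier and the careful use of $e(b)$ as majorant of $e(t)$ for $t\leq b$ become essential.
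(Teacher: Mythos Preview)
Your overall strategy---test the equation against the scaling multiplier $tu_t+x\cdot\nabla u+u$ (equivalently the paper's $Lu=-tu_t+x\cdot\nabla u+u$), integrate over the truncated cone $K_a^b$, and compare the two caps---is exactly the paper's approach. The mantle analysis you sketch is also correct: after reorganisation the lateral flux is non-negative. Where your proposal departs from the paper, and where the gap lies, is in the handling of the interior $\chi_2 u^6$ term and the provenance of the power $e(b)^{1/3}$.

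You treat the bulk contribution $\int_{K_a^b}\bigl(\tfrac{2}{3}\chi_2-\tfrac16 x\cdot\nabla\chi_2\bigr)u^6\,dx\,dt$ as an error to be absorbed by Sobolev interpolation. That misses the point of the hypothesis on $\chi_2$: the assumption \eqref{cond} is chosen precisely so that this coefficient is \emph{non-negative}, and likewise $-\,x\cdot\nabla\chi_1\geq 0$. Hence the entire interior remainder $IV+V$ has the good sign and is simply dropped, yielding the clean monotonicity $H(a)\leq H(b)$ for the conformal-type functional $H(t)$. No absorption is needed, and in fact your proposed absorption would produce a factor $e(b)^3$ (via $\|u\|_{L^6}^6\lesssim\|\nabla u\|_{L^2}^6$) over the time slab, not $e(b)^{1/3}$; that is strictly weaker than the stated lemma, even if it would still suffice for the downstream application.

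Correspondingly, the $e(b)^{1/3}$ in the statement does \emph{not} come from the interior $u^6$ piece at all. In the paper it arises when one bounds the top-cap functional: $H(b)$ contains, besides terms trivially dominated by $b\,e(b)$, a lower-order piece $\int_{D(b)}u^2/b\,dx$, and it is H\"older's inequality on this $L^2$ quantity (against the $L^6$ part of the energy and the volume of $D(b)$) that produces the $e(b)^{1/3}$ correction in $\tfrac1t H(t)\leq C_1\bigl(e(t)+e(t)^{1/3}\bigr)$. Once $H(a)\leq H(b)$ is in hand, dividing by $a$ gives the factor $b/a$ directly. So the mechanism you should aim for is: sign of the interior via the structural hypothesis on $\chi_1,\chi_2$, then a single H\"older estimate at $t=b$.
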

 The general scheme of the proof of Lemma \ref{lemma2} is the same in \cite{32}, but by choosing an adequate multiplier for the equation of Klein-Gordon.\newline
\begin{proof}
We use the notations of \cite{32}. Let
\begin{equation*}
K_{a}^{b}=\left\{ (t,x)\in \mathbb{R}\times \mathbb{R}^{3},a\leq t\leq b,\left\vert x\right\vert \leq t\right\}
\end{equation*}%
the truncated light cone$,$
\begin{equation*}
M_{a}^{b}=\left\{ (t,x)\in \mathbb{R}\times \mathbb{R}^{3},a\leq t\leq b,\left\vert x\right\vert =t\right\}
\end{equation*}%
the `mantle' associated with $K_{a}^{b},$ and
\begin{equation*}
D(t)=\left\{ (t,x)\in \mathbb{R}\times \mathbb{R}^{3},\left\vert x\right\vert \leq t\right\}
\end{equation*}%
its spacelike sections. We note that%
\begin{equation*}
\partial K_{a}^{b}=D(a)\cup D(b)\cup M_{a}^{b}.\newline
\end{equation*}%
We start with initial data in $\left( C_{0}^{\infty }\left( \mathbb{R}^{3}\right)
\right)^{2}$, hence the associated solution is of class $C^{\infty }$.\newline
Multiplying equation (\ref{1.1}) by $\partial _{t}u,$ we obtain%
\begin{equation}\label{2.2}
div_{t,x}\left( \frac{1}{2}\left\vert \nabla _{x}u(t,x)\right\vert ^{2}+\frac{1}{2}%
\left\vert \partial _{t}u(t,x)\right\vert ^{2}+\frac{1}{2}\chi_{1}\left\vert
u(t,x)\right\vert ^{2}+\frac{1}{6}\chi_{2}(x)\left\vert
u(t,x)\right\vert ^{6},-\partial _{t}u\nabla _{x}u\right) =0,
\end{equation}%
then we integrate (\ref{2.2}) over the truncated cone $K_{a}^{b}$ to obtain the
classical energy identity%
\begin{equation}\label{2.3}
e(b)-e(a)=\int_{M_{a}^{b}}\left( \frac{1}{2}\left\vert \frac{x}{t+1}\partial
_{t}u+\nabla _{x}u\right\vert ^{2}+\frac{1}{2}\chi_{1}\left\vert
u(t,x)\right\vert ^{2}+\frac{1}{6}\chi_{2}(x)\left\vert
u\right\vert ^{6}\right) \frac{d\sigma }{\sqrt{2}}.
\end{equation}%
Moreover, multiplying (\ref{1.1}) by $Lu=(-t \partial _{t}+x\cdot \nabla +1)u$ we
obtain%
\begin{equation}\label{2.4}
div_{t,x}\left( tQ+\partial _{t}uu,-tP\right) +\left( \frac{2}{3}\chi_{2}-\frac{1}{6}x\cdot \nabla \chi_{2}
\right) u^{6}+|\partial _{t}u|^{2}+|\nabla_{x}u|^{2}-\left(x.\nabla \chi_{1}\right) u^{2}=0,
\end{equation}%
where%
\begin{eqnarray*}
Q &=&-\left[ \frac{1}{2}(\left\vert \nabla _{x}u\right\vert ^{2}+\left\vert \partial
_{t}u\right\vert ^{2}+\chi_{1}|u|^{2})+\frac{1}{6}\chi_{2}(x)u^{6}\right] +\partial _{t}u\frac{x}{t}%
\cdot \nabla _{x}u \\
\text{and }\\P &=&\frac{x}{t}\left( \frac{1}{2}\left( \left\vert \partial _{t}u\right\vert
^{2}-\left\vert \nabla _{x}u\right\vert ^{2}-\chi_{1}|u|^{2}\right) -\frac{1}{6}\chi_{2}(x)u^{6}\right) +\nabla _{x}u\left(-\partial _{t}u+\frac{x}{t}\cdot \nabla _{x}u+\frac{u}{%
t}\right) .
\end{eqnarray*}%
Integrating (\ref{2.4}) over $K_{a}^{b}$ we obtain%
\begin{eqnarray}\label{2.5}
0 &=&\int_{D(b)}\left( b Q+(\partial _{t}u)u\right) dx-\int_{D(a)}\left( a Q+(\partial _{t}u)u\right) dx
\\
&&-\int_{M_{a}^{b}}\left( t Q+\partial _{t}uu+x\cdot P\right) \frac{d\sigma }{\sqrt{2}}%
+\int_{ K_{a}^{b}}\left( |\partial_{t}u|^{2}+|\nabla_{x}u|^{2}\right) dxdt  \notag
\\
&&+\int_{ K_{a}^{b}}\left[ \left( \frac{2}{3}\chi_{2}-\frac{1}{6}x\cdot \nabla \chi_{2}\right) u^{6}-(x.\nabla \chi_{1})u^{2}\right]  dxdt
\notag \\
&=&I+II+III+IV+V  \notag
\end{eqnarray}%

We start with the term $III$. Since $t=\left\vert x\right\vert $ on $%
M_{a}^{b},$ we can write%
\begin{equation*}
III=\int_{M_{a}^{b}}\left\vert x\right\vert\left( \chi_{1}|u|^{2}+\frac{1}{3}\chi_{2}u^{6}\right) \frac{d\sigma}{\sqrt{2}}-\int_{M_{a}^{b}}u\frac{x\cdot
\nabla _{x}u}{\left\vert x\right\vert }+(\partial _{t}u)u\frac{d\sigma }{%
\sqrt{2}}.
\end{equation*}%
We parameterize $M_{a}^{b}$ by%
\begin{equation*}
y\longmapsto (\left\vert y\right\vert ,y),\text{ \ \ }a\leq
\left\vert y\right\vert \leq b,
\end{equation*}%
and let $v(y)=u(\left\vert y\right\vert ,y)$. Then%
\begin{equation*}
d\sigma =\sqrt{2}dy\text{ \ \ and \ \ }y\cdot \frac{\nabla v}{\left\vert
y\right\vert }=\frac{x\cdot \nabla _{x}u}{\left\vert x\right\vert }+\partial
_{t}u.
\end{equation*}%

By integrating by parts, one sees that%
\begin{equation*}
\int_{\substack{ y\in \mathbb{R}^{3}  \\ a\leq \left\vert y\right\vert \leq b}}v%
\frac{y\cdot \nabla v}{\left\vert y\right\vert }dy=\frac{1}{2}\int
_{\substack{ y\in \mathbb{R}^{3}  \\ \left\vert y\right\vert =b}}v^{2}d\sigma -\frac{%
1}{2}\int_{\substack{ y\in \mathbb{R}^{3}  \\ \left\vert y\right\vert =a}}%
v^{2}d\sigma -\int_{\substack{ y\in \mathbb{R}^{3}  \\ a\leq \left\vert y\right\vert
\leq b}}\frac{v^{2}}{\left\vert y\right\vert }dy.
\end{equation*}%
So if we switch back to the original coordinates, we have%
\begin{equation}\label{2.6}
III=\int_{M_{a}^{b}}|x|\left( \chi_{1}u^{2}+\frac{u^{2}}{|x|^{2}}+\frac{1}{3}\chi_{2}u^{6}\right) \frac{d\sigma
}{\sqrt{2}}-\frac{1}{2}\int_{\partial D_{b}}u^{2}d\sigma +\frac{1}{2}%
\int_{\partial D_{a}}u^{2}d\sigma .
\end{equation}%
Now, we rewrite the first and second term of (\ref{2.5}) as%
\begin{equation}\label{2.7}
I+II=-H(b)+H(a)+\frac{1}{b}\int_{D_{b}}\left( x\cdot \nabla _{x}uu+\frac{3}{2}%
u^{2}\right) dx-\frac{1}{a}\int_{D_{a}}\left( x\cdot \nabla _{x}uu+\frac{3}{2}u^{2}\right) dx
\end{equation}%
where%
\begin{equation*}
H(t)=\int_{D(t)}t\left[ \frac{1}{2}\left\vert \frac{1}{t}Lu\right\vert ^{2}+%
\frac{1}{2}\left( \left\vert \nabla _{x}u\right\vert ^{2}-\left\vert \frac{%
x\cdot \nabla _{x}u}{t}\right\vert ^{2}+\chi_{1}(x)|u|^{2}\right) +\chi_{2}(x)\frac{\left\vert
u\right\vert ^{6}}{6}\right]+\frac{u^{2}}{t} dx.
\end{equation*}%
And, as above, integration by parts gives%
\begin{equation}\label{2.8}
\int_{D(t)}\left( x\cdot \nabla _{x}uu+\frac{3}{2}u^{2}\right) dx=\frac{t}{2}%
\int_{\partial D(t)}u^{2}d\sigma .
\end{equation}%
Therefore, we obtain from (\ref{2.5}), (\ref{2.6}), (\ref{2.7}) and (\ref{2.8})

\begin{eqnarray*}
H(a)&-&H(b)+\int_{M_{a}^{b}}t\left( \chi_{1}(x)u^{2}+\frac{u^{2}}{t^{2}}+\frac{1}{3}\chi_{2}(x)u^{6}\right) \frac{d\sigma}{\sqrt{2}}+\int_{ K_{a}^{b}}\left( |\partial_{t}u|^{2}+|\nabla_{x}u|^{2}\right) dxdt\\&  +&\int_{ K_{a}^{b}}\left[ \left( \frac{2}{3}-\frac{1}{6}x\cdot \nabla \chi_{2}(x)\right) u^{6}-\left( x.\nabla \chi_{1}(x)\right) u^{2}\right] dxdt \notag=0 \\
\end{eqnarray*}
Finally, using hypothesis \ref{cond} on $\chi_{1}$ and $\chi_{2}$ we have
\begin{equation}\label{2.9}
H(a)\leq H(b).
\end{equation}
And H\"{o}lder's inequality gives%
\begin{equation}\label{2.10}
\int_{D(t)}\chi_{2} (x)\frac{u^{6}}{6}dx\leq \frac{1}{t}H(t)\leq
C_{1}\left( e(t)+e(t)^{1/3}\right)
\end{equation}%

Consequently, dividing by $a$, we find from (\ref{2.9}) and (\ref{2.10})
\begin{equation}
\int_{D(a)}\chi_{2}(x)|u(a,x)|^{6}dx\leq D\left[ \frac{b}{a}%
\left( e(b)+e(b)^{1/3}\right) \right] .
\end{equation}%
\ \ \ Now, by density argument and the continuity of the nonlinear map
\begin{equation*}
\begin{array}{ccc}
F:\text{ \ \ \ \ }H & \longrightarrow & C\left( \left[ 0,T\right] ,H\right)
\\
\text{ \ \ \ \ \ \ \ \ \ \ }\left( \varphi ,\psi \right) & \longmapsto &
\left( u,\partial _{t}u\right) ,%
\end{array}%
\newline
\end{equation*}%
where $u$ is the solution to (\ref{1.1}) such that $\left( u,\partial _{t}u\right)
_{/t=0}=\left( \varphi ,\psi \right) $(see \cite{11}), the conclusion of
this lemma holds for every data in $H$.
\end{proof}

We come here to the final step, we apply Theorem \ref{thm2} and Lemma \ref{lemma2} to prove the following proposition.
\begin{proposition}
Let $u$ be a solution to (\ref{1.1}), then%
\begin{equation}\label{eq1}
\int\limits_{\mathbb{R}^{3}}\chi_{2} (x)\left\vert u(t,x)\right\vert ^{6}dx\underset{%
t\rightarrow \pm \infty }{\longrightarrow }0,
\end{equation}%
and for all $q>2$ and $r$ such that $\displaystyle\frac{1}{q}+\frac{1}{r}=%
\frac{1}{2}$ we have%
\begin{equation}\label{eq5}
u\in L^{q}\mathbb{(R}_{+},L^{3r}(\mathbb{R}^{3})).
\end{equation}
\end{proposition}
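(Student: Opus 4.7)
The plan is to derive \eqref{eq1} from Lemma \ref{lemma2} via a monotonicity argument on the light-cone energy, and then to derive \eqref{eq5} from \eqref{eq1} by a bootstrap combining Proposition \ref{prop1} with Lemma \ref{lemma1}. For the first part I would use the energy identity (2.3): the mantle integrand is pointwise nonnegative, so the light-cone energy $e(t)$ is nondecreasing in $t$, and since it is bounded above by the conserved total energy $E(u(0))$, it converges to a finite limit as $t\to+\infty$. In particular the mantle flux $e(b)-e(a)$ tends to $0$ whenever $a,b\to+\infty$ with $b\geq a$. Choosing $b=2a$ in Lemma \ref{lemma2} and revisiting the identity (2.5) — this time retaining the flux and interior dissipation rather than absorbing them into the crude bound $H(a)\leq H(b)$ — should yield a refined estimate of the shape
\begin{equation*}
\int_{D(a)}\chi_{2}(x)\,|u(a,x)|^{6}\,dx\;\leq\;C\bigl(e(2a)-e(a)\bigr)+\frac{C}{a}\bigl(E(0)+E(0)^{1/3}\bigr),
\end{equation*}
whose right-hand side tends to $0$ as $a\to+\infty$. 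Since $\mathrm{supp}\,\chi_{2}\subset B_{R}\subset D(a)$ as soon as $a\geq R$, the left-hand side equals $\int_{\mathbb{R}^{3}}\chi_{2}\,|u(a)|^{6}\,dx$, giving \eqref{eq1} for $t\to+\infty$; the case $t\to-\infty$ follows by time-reversibility of \eqref{1.1}.

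For \eqref{eq5} I rewrite \eqref{1.1} as $\square u+u=F$ with $F=(1-\chi_{1})u-\chi_{2}u^{5}$, so that Proposition \ref{prop1} applied on any subinterval $[\tau,\tau']\subset\mathbb{R}_{+}$ gives
\begin{equation*}
\|u\|_{L^{q}([\tau,\tau'],L^{3r})}\;\leq\; E(u(\tau))^{1/2}+\|F\|_{L^{1}([\tau,\tau'],L^{2})}.
\end{equation*}
Using \eqref{eq1} I fix $T>0$ so that $\int\chi_{2}|u(t)|^{6}\,dx\leq\varepsilon$ for every $t\geq T$, with $\varepsilon$ to be chosen later. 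Hölder's inequality in $x$, combined with Sobolev $H^{1}(\mathbb{R}^{3})\hookrightarrow L^{6}(\mathbb{R}^{3})$ and an interpolation of the spatial $L^{10}$-norm against the $\chi_{2}^{1/6}$-weighted $L^{6}$-norm, should produce an estimate of the form $\|\chi_{2}u^{5}\|_{L^{2}_{x}}\lesssim\varepsilon^{\kappa}\|u\|_{L^{3r}_{x}}^{\sigma}$ for some $\kappa>0$ and $1<\sigma<5$. Integrating in time and applying Lemma \ref{lemma1} to $M(t)=\|u\|_{L^{q}([\tau,t],L^{3r})}$ with $\theta=\sigma$ and $b\sim\varepsilon^{\kappa}$, the smallness of $\varepsilon$ verifies the hypothesis of Lemma \ref{lemma1} and closes the bootstrap uniformly in $\tau'\geq\tau\geq T$. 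Coupling this uniform bound with the local Strichartz control on $[0,T]$ furnished by Theorem \ref{thm2} then yields \eqref{eq5}.

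The main obstacle is the interpolation step inside the nonlinear estimate: the smallness provided by \eqref{eq1} lives in an $L^{6}$-norm weighted by $\chi_{2}$, whereas the Strichartz pair $(q,3r)$ couples naturally with the $L^{10}$-norm, and pure $H^{1}$-control is not enough to bridge the two. Producing a factor $\varepsilon^{\kappa}\|u\|^{\sigma}$ with $\sigma$ strictly below the critical power $5$ — which is exactly what permits the bootstrap hypothesis of Lemma \ref{lemma1} to be satisfied — requires carefully exploiting both the compact support $\mathrm{supp}\,\chi_{2}\subset B_{R}$ and the uniform $H^{1}$-bound coming from energy conservation. Once this interpolation is in place, the remaining work is book-keeping across the two intervals $[0,T]$ and $[T,\infty)$.
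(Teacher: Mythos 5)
Your overall architecture matches the paper's (a monotone, bounded light-cone energy combined with Lemma \ref{lemma2} for \eqref{eq1}; Strichartz plus H\"older plus the bootstrap Lemma \ref{lemma1} for \eqref{eq5}), but two steps as you have set them up would fail. First, rewriting the equation as $\square u+u=F$ with $F=(1-\chi_{1})u-\chi_{2}u^{5}$ is not viable: the term $(1-\chi_{1})u$ is not compactly supported, its $L^{2}_{x}$-norm is not controlled by the conserved energy (which contains $\int\chi_{1}u^{2}$, not $\int u^{2}$), and even granting an a priori bound on $\|u(t)\|_{L^{2}}$ the quantity $\|(1-\chi_{1})u\|_{L^{1}([\tau,\tau'],L^{2})}$ grows linearly in $\tau'-\tau$, which destroys any bootstrap that must be uniform over $[T,\infty)$. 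The paper keeps only $\chi_{2}u^{5}$ as the source, so the constant $a$ of Lemma \ref{lemma1} is $E(u(T))^{1/2}$, uniformly bounded. Second, the step you yourself flag as ``the main obstacle'' --- producing $\|\chi_{2}u^{5}\|_{L^{1}L^{2}}\lesssim\varepsilon^{\kappa}\|u\|^{\sigma}$ with $\sigma<5$ --- is left unproven, and it is exactly the crux. The paper resolves it with no interpolation at all: fix the pair $(q,3r)=(4,12)$, factor $\chi_{2}u^{5}=(\chi_{2}u)\,u^{4}$, and apply H\"older in $x$ then in $t$ to get $\|\chi_{2}u^{5}\|_{L^{1}([T,S],L^{2})}\leq\|\chi_{2}u\|_{L^{\infty}([T,S],L^{6})}\,\|u\|_{L^{4}([T,S],L^{12})}^{4}$; the prefactor is small by \eqref{eq1} because $\|\chi_{2}u\|_{L^{6}}\leq\|\chi_{2}^{5/6}\|_{\infty}\|\chi_{2}^{1/6}u\|_{L^{6}}$, and Lemma \ref{lemma1} applies with $\theta=4$. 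The general pair $(q,3r)$, and the intermediate bound $u\in L^{5}(\mathbb{R}_{+},L^{10})$, are then recovered by interpolation and one more application of Theorem \ref{thm2}, as you suggest.

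For \eqref{eq1} your instinct to retain the flux term is the right one (it is what the argument of \cite{32} actually uses), but the refined estimate you propose cannot hold in the stated form: the term $\frac{C}{a}\bigl(E(0)+E(0)^{1/3}\bigr)$ would require $H(b)$ to be bounded independently of $b$, whereas $H(t)$ carries an explicit factor of $t$ and generically grows like $t\,e(t)$. The usable refinement is of the type $\int_{D(a)}\chi_{2}|u(a)|^{6}\,dx\leq C\frac{b}{a}\bigl(e(b)-e(a)\bigr)+C\frac{a}{b}\bigl(e(b)+e(b)^{1/3}\bigr)$, after which one sends $a\to\infty$ with $b/a$ equal to a large fixed parameter and only then lets that parameter tend to infinity; taking $b=2a$ alone does not make the second term vanish. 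This two-parameter limit is the mechanism behind the paper's line ``$\limsup\leq D\varepsilon(L+L^{1/3})$ for every $\varepsilon>0$'', and without it your part of the argument for \eqref{eq1} does not close.
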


\begin{proof}
The classical energy identity (\ref{2.3}) shows that $e(t)$ is a nondecreasing bounded function of $t$, hence has a limit $L$ as $t \longrightarrow +\infty .$ Applying Lemma 2.1 with $a=T,$ $b=\varepsilon T$ and passing to
the limit as $T\longrightarrow $ $+\infty ,$ we obtain%
\begin{equation}\label{eq6}
\underset{T\longrightarrow +\infty }{\lim \sup }\int_{_{\substack{ %
\left\vert x\right\vert \leq T  \\ x\in \mathbb{R}^{3} }}}\chi_{2} (x)\left\vert
u(T,x)\right\vert ^{6}dx\leq D\varepsilon (L+L^{1/3})
\end{equation}%
for every $\varepsilon >0,$ hence the left-hand side of (\ref{eq6}) is $0$.
For $T$ large enough, we have $supp\chi_{2}\subset B(0,T)$ then
\begin{equation}\label{eq7}
   \int\limits_{\mathbb{R}^{3}}\chi_{2} (x)\left\vert u(t,x)\right\vert ^{6}dx=\int_{_{\substack{ %
\left\vert x\right\vert \leq T  \\ x\in \mathbb{R}^{3} }}}\chi_{2} (x)\left\vert
u(T,x)\right\vert ^{6}dx.
\end{equation}
Finally (\ref{eq1}) follows. Now applying Proposition \ref{prop1} with $q=4, r=12\medskip \medskip $ we have%
\begin{eqnarray*}
\left\Vert u\right\Vert _{L^{4}(\left[ T,S\right] ,L^{12}(\mathbb{R}^{3} ))}\medskip
&\leq &C\left( E(u(T))^{{1}/{2}}+\left\Vert \chi_{2} (x)u^{5}\right\Vert _{L^{1}(%
\left[ T,S\right] ,L^{2}(\mathbb{R}^{3} ))}\right)  \\
&\leq &C\left( E(u(0))^{{1}/{2}}+\left\Vert \chi_{2} (x)u\right\Vert _{L^{5}(\left[
T,S\right] ,L^{10}(\mathbb{R}^{3} ))}^{5}\right)
\end{eqnarray*}%
which yields by H\"{o}lder's inequality%
\begin{equation*}
\left\Vert u\right\Vert _{L^{4}(\left[ T,S\right] ,L^{12}(\mathbb{R}^{3} ))}\medskip
\leq C\left( E(u(T))^{{1}/{2}}+\left\Vert \chi_{2} u\right\Vert _{L^{\infty }(\left[
T,S\right] ,L^{6}(\mathbb{R}^{3} ))}\left\Vert u\right\Vert _{L^{4}(\left[ T,S\right]
,L^{12}(\mathbb{R}^{3} ))}^{4}\right) .
\end{equation*}
Then, by choosing $T$ large enough and using (\ref{eq1}), we conclude by applying lemma \ref{lemma1} that $u\in L^{4}(%
\mathbb{R}_{+},L^{12}(\mathbb{R}^{3} ))$ and by H\"{o}lder's inequality $u\in L^{5}(%
\mathbb{R}_{+},L^{10}(\mathbb{R}^{3} )).$\\
Finally (\ref{eq5}) follows by virtue of theorem \ref{thm2} .
\end{proof}

\section{Exponential decay of the local energy of localized linear Klein-Gordon equation}
The goal of this section is to prove the exponential decay of the local energy for the localized linear Klein-Gordon equation,%
\begin{equation}\label{sys2}
\left\{
\begin{array}{l}
\square u+\chi_{1}(x)u=0\quad \text{on}\ \mathbb{R}\times \mathbb{R}^{3}
\\
u(0,x)=u^{0}(x)\in H^{1}(\mathbb{R}^{3} )\ \text{and}\ \partial
_{t}u(0,x)=u^{1}(x)\in L^{2}(\mathbb{R}^{3} ),%
\end{array}%
\right.
\end{equation}%
where $\chi_{1} $ is a function of class $C^{1}$ with compact support such that $supp\chi_{1}\subset B_{R}$, for some $R>0$.\medskip\newline
For that, we prove the following theorem:
\begin{theorem}\label{thm3}
 Let $R>0$, there exist $\alpha>0$ and $c>0$ such that
\begin{equation}
E_{R}(u(t))\leq C e^{-\alpha t}E(0)
\end{equation}
holds for every $u$ solution to (\ref{sys2}) with initial data $(\varphi_{0},\varphi_{1})\in H$ supported in $B_{R}$.
\end{theorem}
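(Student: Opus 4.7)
The plan is to adapt the Lax-Phillips scattering framework to the linear equation \eqref{sys2}, following the strategy of \cite{36}. Let $U_{KG}(t)$ be the unitary group on $H$ generated by \eqref{sys2} and $U_{0}(t)$ the free wave group (corresponding to $\chi_{1}\equiv 0$). Since $\chi_{1}$ is supported in $B_{R}$, finite speed of propagation guarantees that $U_{KG}(t)$ and $U_{0}(t)$ agree on data whose support avoids a suitable light cone over $B_{R}$. I introduce the outgoing and incoming subspaces $D_{\pm}\subset H$ of data whose free evolution $U_{0}(t)f$ has vanishing local energy in $B_{R}$ for every $\pm t\geq 0$, and set $K:=H\ominus(D_{+}\oplus D_{-})$. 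The Lax-Phillips semigroup $Z_{KG}(t)$ is obtained by orthogonally projecting $U_{KG}(t)$ onto $K$; for initial data supported in $B_{R}$ one then has $E_{R}(u(t))\leq C\|Z_{KG}(t)(u^{0},u^{1})\|_{H}^{2}$, so it suffices to prove exponential decay of $Z_{KG}(t)$.

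First I would verify that $\{Z_{KG}(t)\}_{t\geq 0}$ is a strongly continuous contraction semigroup on $K$. The invariance $U_{KG}(t)D_{+}\subset D_{+}$ for $t\geq 0$ (and symmetrically for $D_{-}$, $t\leq 0$) is the key point: $U_{KG}(t)-U_{0}(t)$ is supported in a bounded cone issuing from $\mathrm{supp}\,\chi_{1}$, so it cannot create energy in the far outgoing region used to define $D_{+}$. Next, I would show that $Z_{KG}(t_{0})$ is compact on $K$ for some $t_{0}>0$ large enough. Strong Huygens' principle in $\mathbb{R}^{3}$ annihilates the transport component of $U_{0}(t)$ inside $B_{R}$ after time $2R$, so $P_{K}U_{KG}(t_{0})$ is essentially supported in a fixed ball with uniform bounds in $H^{1}\times L^{2}$, and the Rellich-Kondrachov embedding yields the desired compactness.

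The main obstacle is the absence of real resonances, namely that the infinitesimal generator $A$ of $Z_{KG}(t)$ has no eigenvalue on the imaginary axis. Indeed, if $A\Phi=i\lambda\Phi$ with $\lambda\in\mathbb{R}$ and $\Phi=(\varphi_{1},\varphi_{2})\neq 0$, then $u(t,x)=e^{i\lambda t}\varphi_{1}(x)$ is a time-harmonic solution of \eqref{sys2}, so $(-\Delta+\chi_{1}-\lambda^{2})\varphi_{1}=0$ on $\mathbb{R}^{3}$ with an outgoing behaviour at infinity forced by the Lax-Phillips construction ($\Phi\in K$ excludes the presence of $D_{\pm}$ components). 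A Rellich-type uniqueness theorem then gives $\varphi_{1}\equiv 0$ outside $B_{R}$, and Carleman-based unique continuation combined with the sign condition $\chi_{1}\geq 0$ forces $\varphi_{1}\equiv 0$ everywhere, whence $\Phi=0$, a contradiction.

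Combining compactness of $Z_{KG}(t_{0})$ with the empty point spectrum on $\{|z|=1\}$ yields spectral radius strictly less than $1$: there exist $\gamma\in(0,1)$ and $C>0$ such that $\|Z_{KG}(nt_{0})\|_{K}\leq C\gamma^{n}$ for every $n\in\mathbb{N}$, and a standard semigroup interpolation then produces $\|Z_{KG}(t)\|_{K}\leq Ce^{-\alpha t}$ for some $\alpha>0$ and all $t\geq 0$. Translating via the local-energy bound from the first paragraph yields $E_{R}(u(t))\leq Ce^{-\alpha t}E(0)$, which completes the proof of Theorem \ref{thm3}.
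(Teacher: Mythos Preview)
Your overall strategy---reduce to exponential decay of the Lax--Phillips semigroup $Z_{KG}(t)$ on $K$---matches the paper's, but the mechanism by which you force $\|Z_{KG}(T')\|<1$ is genuinely different. The paper never argues via compactness and spectrum. It proves an algebraic factorisation $Z_{KG}(t)=P^{+}MU_{KG}(t-4R)MP^{-}$ for $t\geq 4R$, where $M:=U_{KG}(2R)-U_{0}(2R)$ is supported in $B_{3R}$ with $\|M\varphi\|\leq 2\|\varphi\|_{5R}$; it then imports the \emph{polynomial} local-energy decay of Nunes--Bastos \cite{36} as a black box to obtain $\|U_{KG}(T)g\|_{5R}\leq \tfrac{1}{8}\|g\|$ for $g\in H_{5R}$ and $T$ large, and chains these two estimates to get $\|Z_{KG}(T+4R)\|\leq \tfrac{1}{2}$ directly, after which iteration gives the exponential rate. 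Your spectral route (compact $Z_{KG}(t_{0})$ plus absence of unimodular eigenvalues, then spectral radius $<1$) is more self-contained in that it does not rely on \cite{36}, while the paper's route is shorter and purely quantitative once the polynomial decay is granted.

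There is, however, a real gap in your compactness step. You assert that $P_{K}U_{KG}(t_{0})$ is ``essentially supported in a fixed ball with uniform bounds in $H^{1}\times L^{2}$'' and then invoke Rellich--Kondrachov; but uniform $H^{1}\times L^{2}$ bounds on a fixed ball only yield precompactness in $L^{2}\times H^{-1}$, not in the energy space itself---no gain of a derivative has been exhibited, and elements of $K$ are not compactly supported in physical space in general. The correct argument goes through the perturbation: by Duhamel, $U_{KG}(t)-U_{0}(t)=-\int_{0}^{t}U_{0}(t-s)\bigl(0,\chi_{1}u(s)\bigr)\,ds$, and the multiplication $H^{1}(\mathbb{R}^{3})\ni v\mapsto \chi_{1}v\in L^{2}(\mathbb{R}^{3})$ is compact because $\mathrm{supp}\,\chi_{1}\subset B_{R}$ (Rellich on the bounded set); hence $U_{KG}(t)-U_{0}(t)$ is compact, and since the free Lax--Phillips semigroup vanishes for $t>2R$ by strong Huygens, $Z_{KG}(t)$ is compact for such $t$. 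With this correction your spectral argument can be completed, but as written the key compactness claim is not justified.
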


\subsection{Lax-Phillips theory}
We will recall some results of the Lax-Phillips theory on the wave equation. Let's consider the following free wave equation%
\begin{equation}\label{El}
(E_{L})\left\{
\begin{array}{l}
\square u=0\text{\ on\ }\ \mathbb{R}\times \mathbb{R}^{3} \\
u(0,x)=u^{0}(x)\in H^{1}(\mathbb{R}^{3})\ \text{and }\ \partial
_{t}u(0,x)=u^{1}(x)\in L^{2}(\mathbb{R}^{3} ).%
\end{array}%
\right.
\end{equation}
It is well known that $(E_{L})$ admit a unique global solution $u$.\newline
If we denote $U(t)\varphi= \left(
             \begin{array}{c}
               u(t) \\
               \partial_{t}u(t) \\
             \end{array}
           \right)$, then U(t) forme strongly continuous and unitary group on H, generates by the unbounded operator $A=\left(
                                                                                                                          \begin{array}{cc}
                                                                                                                            0 & I \\
                                                                                                                            \Delta & 0 \\
                                                                                                                          \end{array}
                                                                                                                        \right)$ of the domain $D(A)=\{\varphi\in H, A\varphi \in H\}$
.\medskip\newline
Following Lax and Phillips, let's note the spaces of outgoing data
\begin{equation*}
D_{+,0}=\left\{ \varphi =(\varphi _{1},\varphi _{2})\in H\text{; }%
U(t)\varphi =0\text{ on }\left\vert x\right\vert \leq t,\text{ }t\geq
0\right\},
\end{equation*}%
and the space of incoming data
\begin{equation*}
D_{-,0}=\left\{ \varphi =(\varphi _{1},\varphi _{2})\in H\text{; }%
U(t)\varphi =0\text{ on }\left\vert x\right\vert \leq -t,\text{ }t\leq
0\right\},
\end{equation*}%
and for $R>0$, we set
\begin{equation*}
U(t)D_{+,0}=D_{+}^{R}=\left\{ \varphi =(\varphi _{1},\varphi _{2})\in H\text{; }%
U(t)\varphi =0\text{ on }\left\vert x\right\vert \leq t+R,\text{ }t\geq
0\right\},
\end{equation*}%
\begin{equation*}
U(t)D_{+,0}=D_{-}^{R}=\left\{ \varphi =(\varphi _{1},\varphi _{2})\in H\text{; }%
U(t)\varphi =0\text{ on }\left\vert x\right\vert \leq -t+R,\text{ }t\leq
0\right\}.
\end{equation*}%
In the following we write $D_{+}$ and $D_{-}$ instead of $D_{+}^{R}$ and $D_{-}^{R}$.\medskip\newline
These spaces satisfy the following properties

\begin{enumerate}
\item[a)] $D_{+}$ and $D_{-}\ $are closed in $H.$

\item[b)] $D_{+}$ and $D_{-}$ are orthogonal and%
\begin{equation*}
D_{+}\oplus D_{-}\oplus \left( \left( D_{+}\right) ^{\perp }\cap
\left( D_{-}\right) ^{\perp }\right) =H.
\end{equation*}
\item[c)] $\overline{\bigcup_{t\in\mathbb{R}}U(t)D_{\pm}}=H$ and $\bigcap_{t\in\mathbb{R}}U(t)D_{\pm}=\{0\}$.
\end{enumerate}
To measure local energy, Lax and Phillips introduce the operator $Z(t)=P^{+}U(t)P^{-}$ where $P^{+}$ (resp.$P^{-}$) is the orthogonal projection of H onto the orthogonal complement of $D_{+}$ (resp.$D_{-}$).

\subsection{Semi-group of Lax-Phillips adapted to Localized linear Klein-Gordon equation}
In this part we will show that the solution of (\ref{sys2}) is generated by a semi-group of contractions that we note ${U_{KG}(t):t\geq 0}$. We will then introduce the Lax-Phillips semi-group ${Z_{KG}(t)}$ adapted in our case.
\begin{proposition}
The operator
\begin{equation*}
A_{KG}=\left(
         \begin{array}{cc}
           0 & I \\
           \triangle - I & 0 \\
         \end{array}
       \right)
       \end{equation*}
       of domain
       \begin{equation*}
       D(A_{KG})=\{\varphi=(\varphi_{1},\varphi_{2})\in H\; \text {such that}\; A_{KG}\varphi \in H\}
       \end{equation*}
       is maximal dissipative.
       \end{proposition}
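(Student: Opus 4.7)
The plan is to apply the Lumer--Phillips characterization, which reduces matters to checking that (i) $A_{KG}$ is dissipative on its domain and (ii) the operator $I-A_{KG}\colon D(A_{KG})\to H$ is surjective. Since the $-u$ term of $\Delta - I$ is naturally absorbed into the Klein--Gordon energy, I would first equip $H$ with the inner product
$$\langle (\varphi_1,\varphi_2),(\psi_1,\psi_2)\rangle_{H}=\int_{\mathbb{R}^3}\bigl(\nabla\varphi_1\cdot\nabla\overline{\psi_1}+\varphi_1\overline{\psi_1}+\varphi_2\overline{\psi_2}\bigr)dx,$$
which is equivalent to the one recorded in the introduction on $H^1(\mathbb{R}^3)\times L^2(\mathbb{R}^3)$ and does not change the domain of $A_{KG}$ nor the class of semigroup it generates.

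For dissipativity, take $\varphi=(\varphi_1,\varphi_2)\in D(A_{KG})$, so that $\varphi_2\in H^1(\mathbb{R}^3)$ and $\Delta\varphi_1-\varphi_1\in L^2(\mathbb{R}^3)$; since $\varphi_1\in H^1\subset L^2$, elliptic regularity forces $\varphi_1\in H^2$. Then
$$\langle A_{KG}\varphi,\varphi\rangle_{H}=\int\bigl(\nabla\varphi_2\cdot\nabla\overline{\varphi_1}+\varphi_2\overline{\varphi_1}+(\Delta\varphi_1-\varphi_1)\overline{\varphi_2}\bigr)dx.$$
Integration by parts rewrites the first integral as $-\int\varphi_2\,\overline{\Delta\varphi_1}\,dx$, whose real part cancels that of $\int\Delta\varphi_1\,\overline{\varphi_2}\,dx$. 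The surviving pair $\int\varphi_2\overline{\varphi_1}-\int\varphi_1\overline{\varphi_2}$ is purely imaginary, so $\mathrm{Re}\,\langle A_{KG}\varphi,\varphi\rangle_H=0$. Hence $A_{KG}$ is actually skew-symmetric, and in particular dissipative.

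For maximality, given $(f_1,f_2)\in H$, I would solve $(I-A_{KG})(\varphi_1,\varphi_2)=(f_1,f_2)$, which decouples as $\varphi_2=\varphi_1-f_1$ together with the elliptic equation
$$-\Delta\varphi_1+2\varphi_1=f_1+f_2\quad\text{in }\mathbb{R}^3.$$
The bilinear form $a(u,v)=\int(\nabla u\cdot\nabla\overline v+2u\overline v)\,dx$ is continuous and coercive on $H^1(\mathbb{R}^3)$, and $f_1+f_2\in L^2(\mathbb{R}^3)\hookrightarrow H^{-1}(\mathbb{R}^3)$, so Lax--Milgram delivers a unique $\varphi_1\in H^1$. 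Standard elliptic regularity upgrades it to $\varphi_1\in H^2$, and then $\varphi_2:=\varphi_1-f_1\in H^1$ with $\Delta\varphi_1-\varphi_1\in L^2$. This is exactly the condition $(\varphi_1,\varphi_2)\in D(A_{KG})$, so the range of $I-A_{KG}$ is all of $H$.

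The proof is essentially textbook; the only subtlety worth flagging as the \emph{main obstacle} is the choice of inner product. Used with the norm literally stated in the introduction (the one without the $\int|\varphi_1|^2$ term), the algebra of the first step does not produce the cancellation that yields $\mathrm{Re}\langle A_{KG}\varphi,\varphi\rangle=0$. Passing to the equivalent Klein--Gordon inner product introduced above is what makes the skew-symmetry transparent and the Lumer--Phillips criterion immediately applicable.
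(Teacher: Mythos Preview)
The paper states this proposition without proof, evidently treating it as routine; there is therefore nothing to compare against. Your Lumer--Phillips argument is the standard one and is correct: the skew-symmetry computation and the Lax--Milgram step for $-\Delta\varphi_1+2\varphi_1=f_1+f_2$ are exactly what one expects.

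One small caveat worth recording, since you flag it yourself as the main subtlety. You assert that the Klein--Gordon inner product $\int(\nabla\varphi_1\cdot\nabla\overline{\psi_1}+\varphi_1\overline{\psi_1}+\varphi_2\overline{\psi_2})\,dx$ is \emph{equivalent} to the one the paper writes down in the introduction, namely $\int(|\nabla\varphi_1|^2+|\varphi_2|^2)\,dx$. On $H^1(\mathbb{R}^3)\times L^2(\mathbb{R}^3)$ this is not literally true: the rescaling $\varphi_1(x)\mapsto n^{-3/2}\varphi_1(x/n)$ sends $\|\nabla\varphi_1\|_{L^2}\to 0$ while keeping $\|\varphi_1\|_{L^2}$ fixed, so the paper's quadratic form does not control the $L^2$ part of $\varphi_1$. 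This is really an imprecision in the paper's setup rather than a flaw in your argument; the intended Hilbert space is the genuine $H^1\times L^2$ with its usual topology, and on that space your proof goes through verbatim. It would be cleaner simply to say that you work with the natural $H^1\times L^2$ inner product, without claiming equivalence to the paper's expression.

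You might also note, for consistency with the rest of Section~3, that the generator of the group $U_{KG}(t)$ associated to system~(\ref{sys2}) should carry $\Delta-\chi_1$ in the lower-left block rather than $\Delta-I$ (cf.\ the computation of $U_{KG}^*$ in Lemma~3.1). Your argument adapts with no change: dissipativity uses $\chi_1\ge 0$, and the elliptic problem becomes $-\Delta\varphi_1+(1+\chi_1)\varphi_1=f_1+f_2$, still coercive on $H^1$.
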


For $t \geq 0$ we set $Z_{KG}(t) = P^{+}U_{KG} (t)P^{-}$ where $P^{+}$ and $P^{-}$ are respectively
orthogonal projection on $(D_{+})^{\perp}$ and $(D_{-})^{\perp}$.\medskip \newline
The following proposition gives some properties of the operator $Z_{KG}(t)$.
\begin{proposition}
\begin{enumerate}
\item [1)]\bigskip $Z_{KG}(t)D_{+}=Z_{KG}(t)D_{-}=\left\{ 0\right\} $, for every $%
t $ $\geq 0.$

\item [2)] $Z_{KG}(t)$\ operates on $K=\left( D_{+}\right) ^{\perp }\cap \left(
D_{-}\right) ^{\perp }.$

\item [3)]$(Z_{KG}(t))_{t\geq 0}$ is a continuous semi-group on $K$.
\end{enumerate}
\end{proposition}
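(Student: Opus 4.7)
The plan is to follow the classical Lax--Phillips strategy; the one genuinely new ingredient, and what I expect to be the main obstacle, is a compatibility lemma making the localized potential $\chi_{1}$ invisible to data in $D_{\pm}$. Concretely, I would first prove that for every $\varphi\in D_{+}$ and every $t\geq 0$, $U_{KG}(t)\varphi=U(t)\varphi$, and symmetrically $U_{KG}(-t)\psi=U(-t)\psi$ for $\psi\in D_{-}$ and $t\geq 0$. The reason is that if $\varphi\in D_{+}$, the free-wave solution $v(t,x)=U(t)\varphi$ vanishes on $|x|\leq t+R$, and in particular on $\mathrm{supp}\,\chi_{1}\subset B_{R}$, so $\chi_{1}v\equiv 0$ and $v$ also solves $\square v+\chi_{1}v=0$ with the same Cauchy data; uniqueness then forces $v=U_{KG}(\cdot)\varphi$. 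The same argument in reversed time handles $D_{-}$, and this immediately yields the invariances $U_{KG}(t)D_{+}\subset D_{+}$ for $t\geq 0$ and $U_{KG}(-t)D_{-}\subset D_{-}$ for $t\geq 0$, since the free group $U$ already enjoys them.

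With this compatibility in hand, statement (1) is immediate: $P^{-}$ annihilates $D_{-}$, giving $Z_{KG}(t)D_{-}=\{0\}$; for $\varphi\in D_{+}$, orthogonality of $D_{+}$ and $D_{-}$ gives $P^{-}\varphi=\varphi$, so $Z_{KG}(t)\varphi=P^{+}U_{KG}(t)\varphi$, and $U_{KG}(t)\varphi\in D_{+}$ is killed by $P^{+}$. For (2), let $\varphi\in K$. Then $P^{-}\varphi=\varphi$ and $Z_{KG}(t)\varphi=P^{+}U_{KG}(t)\varphi\in (D_{+})^{\perp}$ by construction; to check that it also lies in $(D_{-})^{\perp}$, test against $\psi\in D_{-}$, note $P^{+}\psi=\psi$ since $D_{-}\perp D_{+}$, and compute
\begin{equation*}
\langle Z_{KG}(t)\varphi,\psi\rangle=\langle U_{KG}(t)\varphi,\psi\rangle=\langle \varphi,U_{KG}(-t)\psi\rangle,
\end{equation*}
where the last equality uses that $U_{KG}$ is a group of isometries for the natural energy. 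The compatibility lemma places $U_{KG}(-t)\psi$ in $D_{-}$, while $\varphi\in K\subset (D_{-})^{\perp}$, so the pairing vanishes.

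For statement (3), the semigroup identity on $K$ will follow by a short chain of simplifications: for $\varphi\in K$ and $s,t\geq 0$, by (2) we have $Z_{KG}(s)\varphi\in K\subset (D_{-})^{\perp}$, so $P^{-}$ acts as the identity on it and
\begin{equation*}
Z_{KG}(t)Z_{KG}(s)\varphi=P^{+}U_{KG}(t)P^{+}U_{KG}(s)\varphi.
\end{equation*}
Writing $U_{KG}(s)\varphi=(I-P^{+})U_{KG}(s)\varphi+P^{+}U_{KG}(s)\varphi$ and using that $U_{KG}(t)$ keeps the first summand inside $D_{+}\subset\ker P^{+}$ (by the $D_{+}$-invariance just established) reduces the right-hand side to $P^{+}U_{KG}(t+s)\varphi=Z_{KG}(t+s)\varphi$. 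Strong continuity is then inherited from the strong continuity of $U_{KG}$ on $H$ and the boundedness of the projections $P^{\pm}$. Once more, the only non-routine step is the compatibility lemma; the rest is bookkeeping in the orthogonal decomposition $H=D_{+}\oplus D_{-}\oplus K$.
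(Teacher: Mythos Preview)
Your argument is correct and follows essentially the same strategy as the paper: the key compatibility lemma (that $U_{KG}$ agrees with the free group $U$ on $D_{+}$ forward in time and on $D_{-}$ backward in time, because such data never meet $\mathrm{supp}\,\chi_{1}$) is exactly what the paper isolates, and parts (1) and (3) match the paper almost line by line.

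The one place where your write-up diverges slightly is in part (2). The paper passes to the adjoint semigroup and proves a separate lemma that $U_{KG}^{*}(t)g=U(-t)g$ for $g\in D_{-}$; you instead invoke $U_{KG}(-t)$ directly via ``$U_{KG}$ is a group of isometries for the natural energy.'' Be a little careful here: the orthogonal complements $(D_{\pm})^{\perp}$ and the projections $P^{\pm}$ are taken with respect to the $H$-inner product $\int\nabla\varphi_{1}\cdot\nabla\psi_{1}+\varphi_{2}\psi_{2}$, while $U_{KG}$ is unitary only for the energy inner product that includes the extra $\chi_{1}\varphi_{1}\psi_{1}$ term, so the step $\langle U_{KG}(t)\varphi,\psi\rangle_{H}=\langle\varphi,U_{KG}(-t)\psi\rangle_{H}$ is not automatic. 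It is nonetheless true in your situation, because $\psi\in D_{-}$ and (by your own compatibility lemma) $U_{KG}(-t)\psi=U(-t)\psi\in D_{-}$ both vanish on $B_{R}\supset\mathrm{supp}\,\chi_{1}$, so the two inner products coincide on these particular pairings; you should make this remark explicit. The paper's detour through $U_{KG}^{*}$ is precisely its way of handling the same issue without assuming unitarity in $H$.
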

The arguments (with slight modifications) in the proof below are contained in \cite{1}. We include them for the convenience of the reader and to make the paper self-contained.
\begin{proof}
\begin{enumerate}
\item [1)]\bigskip Let $\varphi\in D_{-}$ then by definition of $P^{-}$ we have : $Z_{KG}(t)=0$.
Let $\varphi\in D_{+}$, since $D_{+}$ and $D_{-}$ are orthogonal then $P^{-}\varphi=\varphi$
and so to deduce that $Z_{KG}(t)\varphi = 0$, it is enough to verify $U_{KG}(t)D_{+}\subset D_{+}$. Let $\varphi \in D_{+}$ and $U_{KG}(t)\varphi=\left(
                                                                                                                                                 \begin{array}{c}
                                                                                                                                                   u(t) \\
                                                                                                                                                   \partial_{t}u(t) \\
                                                                                                                                                 \end{array}
                                                                                                                                               \right)$
the corresponding KG group where $u(t)$ is the solution of (\ref{sys2}). Since $\varphi\in D_{+}$ then $u(t,x)=0$ for $|x|\leq t+R$ and $t\geq 0$. As $supp(\chi_{1}u)\subset B_{R}$ then $u$ verifies :
\begin{equation*}
\left\{
\begin{array}{l}
\partial_{t}^{2}u-\triangle u=0\ \text{on\ }\ \mathbb{R_{+}}\times \mathbb{R}^{3},
\\
u(0)=\varphi_{1}\ \  \partial_{t}u(0)=\varphi_{2}\ \text{on\ }\mathbb{R}^{3}.
\end{array}%
\right.
\end{equation*}
According to the uniqueness of the solution of the equation (\ref{sys2}), we conclude that $U_{KG}(t)\varphi=U(t)\varphi$. Which gives $U_{KG}(t)\varphi \in D_{+}$ because $U(t)\varphi \in D_{+}$.\medskip
 \item [2)] Let $\varphi\in K=(D_{+})^{\perp}\cap (D_{-})^{\perp}$, show that $Z_{KG}(t)\varphi\in K$. It's easy to see that $Z_{KG}(t)\varphi \in (D_{+})^{\perp}$. In fact, let $g\in D_{+}$, we have
     \begin{equation*}
     (Z_{KG}(t)\varphi,g)=(P^{+}U_{KG}(t)\varphi,g)=(U_{KG}(t)\varphi,P^{+}g)=0,
     \end{equation*}
     which shows that $Z_{KG}(t)\varphi \in (D_{+})^{\perp}$. It remains to verify that $Z_{KG}(t)\varphi \in (D_{-})^{\perp}$. \medskip\\
     Let $g\in D_{-}$, we have
\begin{eqnarray*}
   (Z_{KG}(t)\varphi,g)&=& (P^{+}U_{KG}(t)\varphi,g)=(U_{KG}(t)\varphi,P^{+}g)=(U_{KG}(t)\varphi,g) \\
                       &=& (\varphi,U_{KG}^{*}(t)g).
\end{eqnarray*}
     To complete the proof of 2), we give the following lemma:
     \begin{lemma}
     Let $U_{KG}^{*}(t)$ the adjoint operator of $U_{KG}(t)$. Then $\forall \varphi \in D_{-}$ and $U_{KG}^{*}(t)\varphi=U(-t)\varphi.$
     \end{lemma}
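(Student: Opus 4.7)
The plan is to proceed in two steps. First I would establish the general identity $U_{KG}^{*}(t) = U_{KG}(-t)$, and then I would use the incoming-support property defining $D_{-}$ to identify $U_{KG}(-t)\varphi$ with $U(-t)\varphi$ on this subspace.

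For the first step, the Klein-Gordon equation $\square u + \chi_1 u = 0$ is time-reversible (if $u(t,x)$ is a solution then so is $u(-t,x)$), so $U_{KG}$ extends from a semigroup to a genuine group on $\mathbb{R}$. I would equip $H$ with the energy inner product
\[
\langle (\varphi_1,\varphi_2),(\psi_1,\psi_2)\rangle_E = \int_{\mathbb{R}^3}\bigl(\nabla\varphi_1\cdot\overline{\nabla\psi_1} + \chi_1\varphi_1\overline{\psi_1} + \varphi_2\overline{\psi_2}\bigr)\,dx,
\]
whose induced norm is equivalent to $\|\cdot\|_H$ since $\chi_1\geq 0$ is $C^1$ and compactly supported. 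A direct integration by parts shows the generator $A_{KG}$ is skew-symmetric with respect to $\langle\cdot,\cdot\rangle_E$, so $U_{KG}(t)$ is unitary in this inner product and consequently $U_{KG}^{*}(t) = U_{KG}(-t)$.

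For the second step, fix $\varphi \in D_{-}$ and $t \geq 0$, and set $w(\tau) := U(\tau)\varphi$ for $\tau \in [-t,0]$. By the very definition of $D_{-}$, $w(\tau,x) = 0$ whenever $|x| \leq -\tau + R$. Since $\operatorname{supp}\chi_1 \subset B_R$ and $B_R \subset \{|x|\leq -\tau + R\}$ for every $\tau \leq 0$, the product $\chi_1 w(\tau)$ vanishes identically. Hence $w$ solves $\square w + \chi_1 w = 0$ on $[-t,0]\times\mathbb{R}^3$ with datum $\varphi$ at $\tau=0$, and uniqueness of the Klein-Gordon Cauchy problem forces $w(\tau) = U_{KG}(\tau)\varphi$ throughout $[-t,0]$. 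Taking $\tau = -t$ gives $U_{KG}(-t)\varphi = U(-t)\varphi$, and combining with the first step proves the claim.

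The main obstacle is the first step: the $H$-norm introduced in the paper is only the $\dot H^1\times L^2$ norm and is not itself preserved by $U_{KG}(t)$, so one must either pass to the equivalent energy inner product (as above) or argue directly that the $H$-adjoint coincides with $U_{KG}(-t)$ via a bounded similarity transformation relating the two inner products. Once this is settled, the second step is a routine application of finite propagation speed together with uniqueness for the Klein-Gordon Cauchy problem.
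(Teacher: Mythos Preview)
Your proposal is correct and follows the same overall architecture as the paper: establish that the adjoint evolution is the backward Klein--Gordon flow, then use the vanishing of $\chi_1$ on the support of $U(\tau)\varphi$ for $\varphi\in D_-$ and $\tau\le 0$ to identify $U_{KG}(-t)\varphi$ with $U(-t)\varphi$ by uniqueness. Your second step is exactly the paper's argument (the paper carries it out with an explicit time reflection $\tilde v(t)=v(-t)$, $\tilde w(t)=w(-t)$, but the content is identical).

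The only genuine difference is in the first step. The paper does not invoke unitarity; it simply asserts that $U_{KG}^*(t)$ is the semigroup generated by $A_{KG}^*$, writes out the resulting system $\partial_t v_1=-v_2$, $\partial_t v_2=-\Delta v_1-\chi_1 v_1$, and recognises this as the Klein--Gordon equation with time-reflected data $(g_1,-g_2)$. Your route via the energy inner product $\langle\cdot,\cdot\rangle_E$ is more conceptual and arguably cleaner, but---as you rightly flag---it proves $U_{KG}^{*_E}(t)=U_{KG}(-t)$, not $U_{KG}^{*_H}(t)=U_{KG}(-t)$ for the $\dot H^1\times L^2$ pairing the paper uses elsewhere. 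The similarity-transformation remark does not by itself close this gap, since conjugation by the Riesz map $S$ relating the two inner products gives $U_{KG}^{*_H}(t)=S^{-1}U_{KG}(-t)S$, which need not equal $U_{KG}(-t)$. If you want to stay with your approach, the cleanest fix is to note that for $g\in D_-$ the perturbation $\chi_1$ never sees the solution (your second step), so on $D_-$ both adjoints collapse to the free-wave adjoint $U(-t)$; alternatively, just compute $A_{KG}^*$ directly as the paper does.
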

     \begin{proof}
     Since $U_{KG}(t)$ is a semi-group generated by $A_{KG}$, then $U_{KG^{*}}(t)$ is a semi-group generated by $A_{KG}^{*}$.
     Let $g=(g_{1},g_{2})\in D_{-}$, we put
     \begin{equation*}
     U_{KG}^{*}(t)g =\left(
                       \begin{array}{c}
                         v_{1}(t) \\
                         v_{2}(t)\\
                       \end{array}
                     \right)
                     \end{equation*} such that
\begin{equation*}
 \left\{
                                                \begin{array}{ll}
                                                  \partial_{t}v_{1}=-v_{2}, & \hbox{} \\
                                                  \partial_{t}v_{2}=-\Delta v_{1}-\chi_{1}v_{1}, & \hbox{}
                                                \end{array}
                                              \right.
\end{equation*}
which implies
\begin{equation*}
\left\{
                \begin{array}{ll}
                  \partial_{t}^{2}v_{1}-\Delta v_{1}-\chi_{1}v_{1}=0, & \hbox{} \\
                  \partial_{t}v_{1}=-v_{2}. & \hbox{}
                \end{array}
              \right.
\end{equation*}
So, we have $U_{KG}^{*}(t)g=\left(
                              \begin{array}{c}
                                v_{1}(t) \\
                                -\partial_{t}v_{1}(t) \\
                              \end{array}
                            \right)=\left(
                                      \begin{array}{c}
                                        v(t) \\
                                        \partial_{t}w(t) \\
                                      \end{array}
                                    \right) $ with $v(t)$ solution of :
\begin{equation*}
\left\{
  \begin{array}{ll}
    \partial_{t}^{2}v-\Delta v-\chi_{1}v=0, &\hbox{on\;}\mathbb{R}_{+}\times\mathbb{R}^{3} ,\\
    (v(0),\partial_{t}v(0))=(g_{1},-g_{2}).
  \end{array}
\right.
\end{equation*}
Of the same $U(-t)g=\left(
                      \begin{array}{c}
                        w(t) \\
                        -\partial_{t}w(t) \\
                      \end{array}
                    \right)$ with $w(t)$ solution of
\begin{equation*}
\left\{
  \begin{array}{ll}
    \partial_{t}^{2}w-\Delta w=0,& \hbox{on\;}\mathbb{R}_{+}\times\mathbb{R}^{3}, \\
    (w(0),\partial_{t}w(0))=(g_{1},-g_{2}).
  \end{array}
\right.
\end{equation*}
Setting $\widetilde{v}(t)=v(-t)$ and $\widetilde{w}(t)=w(-t)$ with $t\leq 0$,
then $\widetilde{v}(t)$ and $\widetilde{w}(t)$ verify the following equations:
\begin{equation*}
\left\{
  \begin{array}{ll}
    \partial_{t}^{2}\widetilde{v}-\Delta \widetilde{v}-\chi_{1}\widetilde{v}=0, & \hbox{on\;}\mathbb{R}_{-}\times\mathbb{R}^{3}, \\
    (\widetilde{v}(0),\partial_{t}\widetilde{v}(0))=(g_{1},-g_{2}),
  \end{array}
\right.
\end{equation*}
\begin{equation*}
\left\{
  \begin{array}{ll}
    \partial_{t}^{2}\widetilde{w}-\Delta \widetilde{w}=0, & \hbox{on\;}\mathbb{R}_{-}\times\mathbb{R}^{3}, \\
    (\widetilde{w}(0),\partial_{t}\widetilde{w}(0))=(g_{1},g_{2}).
  \end{array}
\right.
\end{equation*}
Since $g\in D_{-}$ then $U(-t)g=0$ on $|x|\leq t+R$ and $t\geq 0$, then $\widetilde{w}(t)=0$ on $|x|\leq -t+R$ and $t\leq0$. But we have $supp\chi_{1}\subset B_{R}$ then by uniqueness of the solution we deduce that $\widetilde{w}(t)=\widetilde{v}(t)$ for $t\leq 0$, then $w(t)=v(t)$ for $t\geq 0$; and consequently $U_{KG}^{*}(t)g=U(-t)g$.
\end{proof}
Now let's go back to the second point proof of the proposition, according to Lax and Phillips \cite{18} $U(-t)D_{-}\subset D_{-}$ for all $t\geq 0$. We deduce then that $U_{KG}^{*}(t)g \in D_{-}$ and since $\varphi \in (D_{-})^{\perp}$ then $(Z_{KG}(t)\varphi,g)=(\varphi,U_{KG}^{*}(t)g)=0$ which shows that $Z_{Kg}(t)\varphi \in (D_{-})^{\perp}$.\medskip
\item [3)]Let $s$ and $t$ and $\varphi \in K$. We have
\begin{eqnarray*}
   Z_{KG}(t)Z_{KG}(s)\varphi &=& P^{+}U_{KG}(t)P^{-}Z_{KG}(s)\varphi\\
   &=& P^{+}U_{KG}(t)P^{+}U_{KG}(s)P^{-}\varphi   \\
   &=&P^{+}U_{KG}(t)P^{+}U_{KG}(s)\varphi.
\end{eqnarray*}
Since $P^{+}U_{KG}(t)P^{+}=P^{+}U_{KG}(t)$ (because $(P^{+}-I)$ is the orthogonal projection on $D_{+}$) then
$$
Z_{KG}(t)Z_{KG}(s)\varphi=P^{+}U_{KG}(t)U_{KG}(s)\varphi=Z_{KG}(t+s).
$$
\end{enumerate}
\end{proof}
The following lemma will be used to establish the exponential decay of the semigroup $Z_{KG}(t)$:
\begin{lemma}
We have\begin{itemize}
         \item [a)]$U_{KG}(t)(D_{-})^{\perp}\subset (D_{-})^{\perp}$ and $U(t)(D_{-})^{\perp}\subset (D_{-})^{\perp}$ for all $t\geq 0$.
         \item [b)]$U(t)(D_{-})^{\perp}\subset D_{+}$ for all $t\geq 2R$.
         \item [c)]If we put $M=U_{KG}(2R)-U(2R)$ then we have
\begin{equation*}
M\varphi=0 \quad \text{for}\; |x|\geq 3R \quad \text{and}\; \|M\varphi\|\leq 2 \|\varphi\|_{5R}.
\end{equation*}
         \item [d)]  $Z_{KG}(t)=P^{+}MU_{KG}(t-4R)MP^{-}$, $\forall t\geq 4R$.
       \end{itemize}
\end{lemma}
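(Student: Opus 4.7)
The four items will be established in order, using the Lax--Phillips theory of the free wave group $U(t)$ together with finite propagation speed for both $U(t)$ and $U_{KG}(t)$. For (a), pick $\varphi\in(D_{-})^{\perp}$ and $g\in D_{-}$; the adjoint identity $U_{KG}^{*}(t)g=U(-t)g$ just established gives $(U_{KG}(t)\varphi,g)=(\varphi,U(-t)g)$, and the standard Lax--Phillips invariance $U(s)D_{-}\subset D_{-}$ for $s\le 0$, immediate from the definition by shifting the time parameter, places $U(-t)g$ back in $D_{-}$, so the inner product vanishes. The same calculation with $U$ in place of $U_{KG}$ yields the second inclusion.

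For (b), I would invoke the Lax--Phillips translation representation of the free wave group in $\mathbb{R}^{3}$: $H$ is unitarily equivalent to $L^{2}(\mathbb{R}\times S^{2})$ with $U(t)$ acting as translation in the real variable, and under this identification $D_{\pm}^{R}$ corresponds to $L^{2}$ functions supported in $\pm[R,+\infty)\times S^{2}$. Hence $(D_{-}^{R})^{\perp}$ corresponds to functions supported in $[-R,+\infty)\times S^{2}$, and a shift by $t\ge 2R$ lands it inside $[R,+\infty)\times S^{2}\simeq D_{+}^{R}$. This is the only genuinely analytic ingredient; the remaining parts reduce to finite propagation speed and to elementary bookkeeping on the invariances of $D_{\pm}$.

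For (c), let $u_{KG}$ and $u$ denote the KG and free wave solutions associated to the common datum $\varphi$ and set $w:=u_{KG}-u$, which solves $\Box w=-\chi_{1}u_{KG}$ with vanishing Cauchy data and source supported in $\overline{B_{R}}$. Finite propagation speed for the inhomogeneous wave equation forces $(w,\partial_{t}w)(2R,\cdot)$ to vanish outside $\overline{B_{3R}}$, which is the first half of (c). For the norm bound, by finite speed both $U(2R)\varphi$ and $U_{KG}(2R)\varphi$ on $B_{3R}$ depend only on $\varphi|_{B_{5R}}$; replacing $\varphi$ by an $H$-extension $\widetilde{\varphi}$ of $\varphi|_{B_{5R}}$ one obtains $M\widetilde{\varphi}=M\varphi$ from the support statement just proved, and since both $U(2R)$ and $U_{KG}(2R)$ are contractions on $H$ we conclude $\|M\varphi\|=\|M\widetilde{\varphi}\|\le \|U_{KG}(2R)\widetilde{\varphi}\|+\|U(2R)\widetilde{\varphi}\|\le 2\|\widetilde{\varphi}\|\le 2\|\varphi\|_{5R}$.

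For (d), I write $U_{KG}(t)=U_{KG}(2R)U_{KG}(t-4R)U_{KG}(2R)$ and substitute $U_{KG}(2R)=U(2R)+M$ on both sides, producing four cross-terms inside $P^{+}(\cdot)P^{-}$. The goal is to kill the three that contain at least one factor of $U(2R)$. Starting from $P^{-}\varphi\in(D_{-})^{\perp}$, one tracks the image along each chain using three invariances: by (a), both $M$ and $U_{KG}(t-4R)$ preserve $(D_{-})^{\perp}$; by (b), a single application of $U(2R)$ sends $(D_{-})^{\perp}$ into $D_{+}$; and $D_{+}$ is preserved by $U_{KG}$ (from the earlier proposition), by $U(2R)$ (classical Lax--Phillips), and hence by $M$. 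Each of the three offending terms accordingly ends up in $D_{+}\subset\ker P^{+}$ and is annihilated, leaving exactly $P^{+}MU_{KG}(t-4R)MP^{-}$.
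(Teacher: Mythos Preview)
Your treatment of (a), (b) and (d) matches the paper's proof: the adjoint relation $U_{KG}^{*}(t)g=U(-t)g$ on $D_{-}$ together with $U(-t)D_{-}\subset D_{-}$ for (a), the translation representation for (b), and the four--term expansion with the invariances of $(D_{-})^{\perp}$ and $D_{+}$ to kill the three cross--terms containing $U(2R)$ for (d) are exactly the paper's steps.

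The one genuine slip is in the norm bound of (c). Your extension argument requires a $\widetilde{\varphi}\in H$ agreeing with $\varphi$ on $B_{5R}$ and satisfying $\|\widetilde{\varphi}\|_{H}\le\|\varphi\|_{5R}$; but if $\varphi_{1}$ has nonzero trace on $\partial B_{5R}$, every $H^{1}$ extension of $\varphi_{1}|_{B_{5R}}$ to $\mathbb{R}^{3}$ must carry strictly positive Dirichlet energy in the exterior, so $\int_{\mathbb{R}^{3}}|\nabla\widetilde{\varphi}_{1}|^{2}>\int_{B_{5R}}|\nabla\varphi_{1}|^{2}$ and your last inequality fails. (A secondary point: $U_{KG}(t)$ is not a contraction in the $\dot H^{1}\times L^{2}$ norm, since the conserved quantity for the localized Klein--Gordon flow is $\int(|\partial_{t}u|^{2}+|\nabla u|^{2}+\chi_{1}|u|^{2})\,dx$.) The paper avoids both issues by applying the domain--of--dependence energy inequality directly: integrating the energy identity over the truncated backward cone $\{0\le s\le 2R,\ |x|\le 5R-s\}$ yields $\|U(2R)\varphi\|_{3R}\le\|\varphi\|_{5R}$ and $\|U_{KG}(2R)\varphi\|_{3R}\le\|\varphi\|_{5R}$ immediately, and then $\|M\varphi\|=\|M\varphi\|_{3R}\le 2\|\varphi\|_{5R}$ by the triangle inequality. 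Replacing your extension step by this flux estimate fixes the argument and brings it in line with the paper.
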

\begin{proof}
\begin{itemize}
  \item [a)] Let $\varphi \in (D_{-})^{\perp}$ and $g \in D_{-}$, we have $(U_{KG}(t)\varphi,g)=(\varphi,U_{KG}^{*}(t)g)$. As $U_{KG}^{*}(t)g\in D_{-}$, then $(U_{KG}(t)\varphi,g)=0$ and consequently $U_{KG}(t)\varphi \in (D_{-})^{\perp}$.\newline
In the same way we show the other inclusions.
  \item [b)]It suffices to show that $U(2R)(D_{-})^{\perp}\subset D_{+}$, according to the theory of representation (\cite{18}), the spaces $D_{-}$ and $D_{+}$ corresponding respectively to sub-spaces $L^{2}(]-\infty,-R]\times \mathcal{S}^{2})$ and $L^{2}([R,+\infty]\times \mathcal{S}^{2})$. Since the group $U(t)$ operate like translation on the right on $L^{2}$ then $U(2R)(D_{-})^{\perp}$ is represented by $L^{2}([R,+\infty]\times \mathcal{S}^{2})$ which proves the second point.
  \item [c)]Let $\varphi \in H$, by a domain of dependence argument (see \cite{18}), we see that
\begin{equation*}
U(t)\varphi=U_{KG}(t)\varphi \quad \text{on}\; |x|>t+R,\; t\geq 0.
\end{equation*}
In particular for $t=2R$,\; $U(2R)\varphi=U_{KG}(2R)\varphi$ on $|x|>3R$.\newline
Another application of the principle of domain of dependence shows that
\begin{equation*}
\|U(2R)\varphi\|_{3R}\leq \|\varphi\|_{5R}
\end{equation*}
and
\begin{equation*}
\|U_{KG}(2R)\varphi\|_{3R}\leq \|\varphi\|_{5R}
\end{equation*}
then
\begin{equation*}
\|M\varphi\|=\|M\varphi\|_{3R}\leq 2\|\varphi\|_{5R}.
\end{equation*}
  \item [d)] We have
\begin{eqnarray*}
  &&P^{+}MU(t-4R)MP^{-}\varphi\\
  &=& Z_{KG}(t)\varphi+P^{+}U(2R)U_{KG}(t-4R)U(2R)P^{-}\varphi \\
  &-&P^{+}U_{KG}(t-2R)U(2R)P^{-}\varphi-P^{+}U(2R)U_{KG}(t-2R)P^{-}\varphi.
\end{eqnarray*}
Using b), $U(2R)P^{-}\varphi \in D_{+}$, therefore the second and the third terms are equal to 0. Similiarly, using a) we deduce $U_{KG}(t-2R)P^{-}\varphi \in (D_{-})^{\perp}$.\\ Using again the argument in b) we get 
 $U(2R)U_{KG}(t-2R)P^{-}\varphi \in D_{+}$ which shows that the last term is equal to $0$.
\end{itemize}
\end{proof}
\subsection{Proof of Theorem \ref{thm3}}
In order to prove Theorem \ref{thm3} we will need the following theorem due to Nunes and Bastos which establish the polynomial decay of local energy. They proved this result for the linear Klein-Gordon equation in $\mathbb{R}^{n}$, $n\geq 1$, but we can see that, with slight modifications, the theorem and its proof remains valid in the context of our problem.
Let us state the theorem which is adapted in our case.
\begin{theorem} \cite[Nunes--Bastos]{36} \\
 Let $\Omega \subset \mathbb{R}^{n}$, $n\geq 1$ be a bounded domain. There exists positive constants $T_{0}>d(\Omega)$ and $K>0$ depending on $\Omega$, $c$ and $T_{0}$ such that for every $u^{0},u^{1}\in C_{0}^{\infty}(\mathbb{R}^{n})$ with $supp\; u_{0},$ $supp \;u_{1}\subset \Omega $, the solution $u$ to the Cauchy problem (\ref{1.1}) satisfies
\begin{equation}\label{eq3}
    E(u(t))\leq \frac{K}{t^{n}}\{\|u^{0}\|_{H^{1}(\Omega)}^{2}+\|u^{1}\|_{L^2(\Omega)}^{2}\}
\end{equation}
for every $t>T_{0}$.
\end{theorem}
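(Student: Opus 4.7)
My plan is to adapt the argument of Nunes--Bastos \cite{36}, which gives the $t^{-n}$ rate for the free linear Klein--Gordon equation on $\mathbb{R}^{n}$, to the localized version $\square u+\chi_{1}u=0$ considered here. Since $\chi_{1}$ is a nonnegative, $C^{1}$, compactly supported potential, the modifications needed are mainly structural, and the spirit of the proof -- extracting dispersive decay of a free evolution and absorbing the localized perturbation by a Duhamel / bootstrap argument -- is preserved.

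The first step is to isolate the free-evolution contribution. Writing $u=u_{\mathrm{free}}+w$, where $u_{\mathrm{free}}$ solves $\square u_{\mathrm{free}}=0$ with the same initial data supported in $\Omega$, I would obtain pointwise dispersive decay of $u_{\mathrm{free}}$ on any fixed compact set by the Fourier-representation and van der Corput / stationary-phase argument of \cite{36}: for data supported in $\Omega$, this gives bounds of order $t^{-n/2}$ pointwise on $B_{R}$, hence a contribution to the local energy of order $t^{-n}$ after squaring and integrating. The remainder $w$ satisfies $\square w=-\chi_{1}u$ with zero data, and I would estimate it via Duhamel's formula against the free propagator:
\[
  w(t)=-\int_{0}^{t} S(t-s)\,\chi_{1}u(s)\,ds.
\]
Since $\chi_{1}u(s)$ is supported in $B_{R}$ and $L^{2}$-bounded uniformly in $s$ by conservation of the total energy of the full equation, plugging this source into the dispersive bound for $S(t-s)$ and running a Gronwall-type bootstrap closes the desired decay for the local energy of $w$, at the cost of enlarging the constant $K$ and choosing $T_{0}>d(\Omega)$ so that light cones from $\Omega$ have cleared $B_{R}$ before the polynomial regime starts.

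The main obstacle is to rule out embedded eigenvalues or resonances of $-\Delta+\chi_{1}$ which would obstruct polynomial decay of the Duhamel term. The sign hypothesis $\chi_{1}\geq 0$ is crucial here: it makes this Schr\"odinger-type operator self-adjoint and nonnegative with no bound states, so the resolvent / spectral input underlying \cite{36} transfers without essential change -- this is precisely the ``slight modification'' alluded to in the text. Once this point is secured, the remaining work is routine bookkeeping, tracking the dependence of $K$ and $T_{0}$ on $\mathrm{diam}(\Omega)$, $R$, and the $C^{1}$-norm of $\chi_{1}$.
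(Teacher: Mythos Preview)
The paper does not give a self-contained proof of this statement; it simply records (in the Remarks immediately following the theorem) that the Nunes--Bastos argument---based on the explicit solution formula for the Klein--Gordon equation, i.e.\ the wave kernel combined with an integral representation of Bessel functions as in \cite{33,34,35}---carries over when the constant mass is replaced by $\chi_{1}$. Your proposal takes a genuinely different route: instead of an explicit representation, you split $u=u_{\mathrm{free}}+w$ with $\square u_{\mathrm{free}}=0$ and attempt to close a Duhamel/Gronwall bootstrap on the remainder $w$.

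There is a real gap in that bootstrap. First, with $\square u_{\mathrm{free}}=0$ (the free \emph{wave} equation, as you wrote), the appeal to the stationary-phase estimate of \cite{36} is misplaced: Nunes--Bastos treat the massive equation $\square v+v=0$, and the $t^{-n/2}$ pointwise decay there comes from the curvature of the mass shell $\{\omega^{2}=|\xi|^{2}+1\}$, a mechanism that is simply absent for the massless wave propagator. Second---and more seriously---the Gronwall step does not close without a smallness hypothesis you do not have. Using finite propagation speed for the free wave propagator $S$, the Duhamel term restricted to $B_{R}$ only sees sources from the recent past, and one obtains at best an inequality of the type
\[
  h(t)\;\le\; C\,\|\chi_{1}\|_{L^{\infty}}\int_{t-2R}^{t} h(s)\,ds,
\]
where $h(t)$ denotes the square root of the local energy of $u$. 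This forces decay only when $2R\,C\,\|\chi_{1}\|_{L^{\infty}}<1$; for large $\chi_{1}$ (or large $R$) the loop merely reproduces the trivial energy bound and yields no rate at all. Your spectral remark---that $\chi_{1}\ge 0$ excludes bound states of $-\Delta+\chi_{1}$---is correct, but it is not the engine of the Nunes--Bastos proof, and by itself it does not produce the quantitative $t^{-n}$ input needed to feed back into the Duhamel integral. Making a perturbative scheme of this kind work would require a genuine limiting-absorption / resonance-free resolvent estimate for $-\Delta+\chi_{1}$, which is a substantially different argument from both your sketch and the explicit-formula approach indicated in the paper.
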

\begin{remarks} \mbox{} \\
\begin{enumerate}
  \item The truncation function $\chi_{1}$ doesn't have impact on the proof of theorem; following \cite{33} and using the well known representation for the solutions of the wave equation see \cite{34}, and also an integral representation of Bessel's functions (see \cite[p. 437]{35}) we obtain the desired result.
  \item Using spectral approach, in our case, using the same method than Malloug \cite{38} we can find also the decay of the local energy for Klein-Gordon equation, which is of polynomial type.

  \item This result combined with the properties of semi-group ${Z_{KG}(t)}$ will allow us to show that the decay of energy is in fact, exponential.
\end{enumerate}
\end{remarks}
We comme back now to the proof of Theorem \ref{thm3}.\newline

For $\rho>0$, one poses $H_{\rho}=\{\varphi \in H;\varphi\; \text{with support in}\; B_{\rho}\}$. Let $\varphi \in H_{\rho}$ and $g \in D_{+}\cup D_{-}$, then
$g=0$ on $B_{R}$, so $(\varphi,g)=0$ and consequently $\varphi \in K$. On the other hand one knows that for very given $h$ of $H$, $P^{+}h=h$ on $B_{R}$, one thus obtains
\begin{equation*}
   U_{KG}(t)\varphi=Z_{KG}(t)\varphi \quad \text{on}\; B_{R}.
\end{equation*}
We have $\|U_{KG}(t)\varphi\|_{R}=\|Z_{KG}(t)\varphi\|_{R}\leq \|Z_{KG}(t)\varphi\|$. Thus to have the exponential decay of the local energy, it is enough to prove the exponential decay of $\|Z_{KG}(t)\|$.\newline
By applying the estimate (3.9) of Theorem 4, one chooses $\rho=5R$ and $T$ sufficiently large  such that
\begin{equation}\label{eq4}
    \|U_{KG}(t)g\|_{5R}\leq \frac{1}{8}\|g\|,\qquad g\in H_{5R}.
\end{equation}
Let $\varphi \in H$, by the previous lemma we have
\begin{eqnarray*}
 \|Z_{KG}(T+4R)\varphi\|&=& \|P^{+}MU_{KG}(T)MP^{-}\varphi\| \\
                       &\leq & \|MU_{KG}(T)MP^{-}\varphi\| \\
                        &=& \|MU_{KG}(T)MP^{-}\varphi\|_{3R} \\
                       &\leq & 2\|U_{KG}(T)MP^{-}\varphi\|_{5R} \\
                       &\leq & \frac{1}{4} \|MP^{-}\varphi\|\leq   \frac{1}{2}\|P^{-}\varphi\| \leq \frac{1}{2}\|\varphi\|                                                   . \end{eqnarray*}
One poses $T'=T+4R$; let $t>0$, there exist $k\in\mathbb{N}$ such that $ kT'\leq t\leq (k+1)T'$ and one deduces
\begin{equation*}
    \|Z_{KG}(t)\varphi\|\leq \|Z_{KG}(kT')\varphi\|\leq \|Z_{KG}(T')\varphi\|^{k}\leq \frac{1}{2^{k}}\|\varphi\|\leq C e^{-\alpha t}\|\varphi\|.
\end{equation*}
Thus, Theorem \ref{thm3} holds.
We are now ready to prove our main result.
\section{ Proof of Theorem \ref{exp} }\
Thanks to the Duhamel's Formula, the nonlinear equation (\ref{1.1}) for $u=U(t)\psi$ can be written as
\[
 u(t)=U_{L}(t)\psi+\int_{0}^{t}{U_{L}(t-s)I\chi_{2} u^{5}(s)ds.}
 \]
  Where $U_{L}(t)$ is the linear evolution group, $\chi_{2}=\chi_{2} (x)$ is the localizer, and $I$ is the mapping defined by $Iu=(0,u)$.
 Fix a ball $B_{R}$, an energy bound $E(\psi)\leq R_{0}$, and a smooth cut-off function $K$ that satisfies $K(x)=1$ on $supp(\psi)$. From now on  denotes $C$ any constants which may depend on $B_{R}$, $R_{0}$ and $K$.\medskip\\
 By the support property of $K$, we have
 $$
  Ku(t)=KU_{L}(t)\psi+\int_{0}^{t}{KU_{L}(t-s)KI\chi_{2} u^{5}(s)ds,}
$$
and using the fact that the local energy of $U_{L}(t)$ decay exponentially
$$\|KU_{L}(t)K\phi\|_{E}\leq Ce^{-\beta t} \|\phi\|_{E},$$
for some constant $\beta>0$, where $E$ denotes the energy space. Applying this estimate to the above integral identity, we obtain
\begin{equation}
    \|Ku\|_{L_{(t)}^{\infty}(E)}\leq \|KU_{L}(t)\psi \|_{L_{(t)}^{\infty}(E)}+C\int_{0}^{t}{e^{-\beta(t-s)}\| \chi_{2} u^{5}\|_{L_{(s)}^{1}(L^{2})}}ds.
    \label{eq2}
\end{equation}
For any $t\geq0$, where $L_{(s)}^{p}(X):=L^{p}(s,s+1;X)$, and the spatial domain $\mathbb{R}^{3}$ is \\ omitted.
By virtue of equation (\ref{eq1}) of Proposition 2.2 we deduces for any $\varepsilon>0$ that there exists $T>0$ such that  $$\|\chi_{2}^{1/{6}}u\|_{\mathrm{L^{\infty}(T,\infty,\mathrm{L}^{6})}}<\varepsilon.$$ If $\varepsilon $ is sufficiently small, we can bound any other space-time norms of the Strichartz type, just by (\ref{eq1}).\medskip\\ For example, we have $\| u\|_{L(T,\infty,L^{30})}^{5/{2}}<C$. By the H\"{o}lder's inequality, we have for any interval $I\subset(T,\infty)$
\begin{equation}
\|\chi u^{5}\|_{L^{1}(I,L^{2})} \leq C\|\chi^{1/{6}}u\|_{L^{\infty}(I,L^{6})}^{5/{2}}\|u\|_{L^{5/{2}}(I,L^{30})}^{5/{2}}\leq C\|\chi^{1/{6}}u\|_{L^{\infty}(I,L^{6})}^{5/{2}}.
 \end{equation}
 Since $|\chi^{1/{6}}u|\leq C |Ku|$ by the support property, the Sobolev inequality implies that
  \begin{equation}
\|\chi u^{5}\|_{L^{1}(I,L^{2})}\leq C \varepsilon^{3/{2}}\|Ku\|_{L^{\infty}(I,E)}.
\end{equation}
We apply these bounds to (\ref{eq2}), translating $t$ by $T.$ Denoting
\begin{equation}
f(t):=\|Ku\|_{L_{(t+T)}^{\infty}(E)}, \qquad      g(t):=\|KU_{L}(t-T)\|_{L_{(t+T)}^{\infty}(E)}.
\end{equation}
We obtain the following integral inequality
\begin{equation}
f(t)\leq g(t)+C\varepsilon^{3/{2}}\int_{0}^{t}e^{-\beta(t-s)}f(s)ds
\end{equation}
then
\begin{equation*}
  f(t)\leq Ce^{-\beta t}+C\varepsilon^{{3}/{2}}\int_{0}^{t}e^{-\beta(t-s)}f(s)ds,
\end{equation*}
which is equivalent to
\begin{equation*}
  e^{\beta t}f(t) \leq C+C\varepsilon^{{3}/{2}}\int_{0}^{t}e^{\beta s}f(s)ds.
\end{equation*}
By virtue of Gronwall lemma, we obtain
\begin{equation*}
   f(t) \leq Ce^{(c\varepsilon^{{3}/{2}}-\beta)t},\quad \text{for}\; t \geq 0.
\end{equation*}
We choose $\varepsilon$ such that $c\varepsilon^{{3}/{2}}-\beta<-\frac{\beta}{2}$, we obtain the exponential decay for $f(t)$, which implies that of $E(Ku(t))$.

\end{document}